\newtheorem{thm}{Theorem} [section]
\newtheorem{lemma}[thm]{Lemma}
\newtheorem{corollary}[thm]{Corollary}
\newtheorem{prop}[thm]{Proposition}
\theoremstyle{definition}
\newtheorem{defn}[thm]{Definition}
\newtheorem{example}[thm]{Example}
\newtheorem{remark}[thm]{Remark}
\begin{document}

\numberwithin{equation}{section}

\newcommand{\hs}{\mbox{\hspace{.4em}}}
\newcommand{\ds}{\displaystyle}
\newcommand{\bd}{\begin{displaymath}}
\newcommand{\ed}{\end{displaymath}}
\newcommand{\bcd}{\begin{CD}}
\newcommand{\ecd}{\end{CD}}

\newcommand{\on}{\operatorname}
\newcommand{\proj}{\operatorname{Proj}}
\newcommand{\bproj}{\underline{\operatorname{Proj}}}

\newcommand{\spec}{\operatorname{Spec}}
\newcommand{\Spec}{\operatorname{Spec}}
\newcommand{\bspec}{\underline{\operatorname{Spec}}}
\newcommand{\pline}{{\mathbf P} ^1}
\newcommand{\aline}{{\mathbf A} ^1}
\newcommand{\pplane}{{\mathbf P}^2}
\newcommand{\aplane}{{\mathbf A}^2}
\newcommand{\coker}{{\operatorname{coker}}}
\newcommand{\ldb}{[[}
\newcommand{\rdb}{]]}

\newcommand{\Sym}{\operatorname{Sym}^{\bullet}}
\newcommand{\Symp}{\operatorname{Sym}}
\newcommand{\Pic}{\bf{Pic}}
\newcommand{\Aut}{\operatorname{Aut}}
\newcommand{\Isom}{\operatorname{Isom}}
\newcommand{\PAut}{\operatorname{PAut}}

\newcommand{\too}{\twoheadrightarrow}
\newcommand{\C}{{\mathbf C}}
\newcommand{\Z}{{\mathbf Z}}
\newcommand{\Q}{{\mathbf Q}}
\newcommand{\Cx}{{\mathbf C}^{\times}}
\newcommand{\Cbar}{\overline{\C}}
\newcommand{\Cxbar}{\overline{\Cx}}
\newcommand{\cA}{{\mathcal A}}
\newcommand{\cS}{{\mathcal S}}
\newcommand{\cV}{{\mathcal V}}
\newcommand{\cM}{{\mathcal M}}
\newcommand{\bA}{{\mathbf A}}
\newcommand{\cB}{{\mathcal B}}
\newcommand{\cC}{{\mathcal C}}
\newcommand{\cD}{{\mathcal D}}
\newcommand{\D}{{\mathcal D}}
\newcommand{\cs}{{\mathbf C} ^*}
\newcommand{\boldc}{{\mathbf C}}
\newcommand{\cE}{{\mathcal E}}
\newcommand{\cF}{{\mathcal F}}
\newcommand{\bF}{{\mathbf F}}
\newcommand{\cG}{{\mathcal G}}
\newcommand{\G}{{\mathbb G}}
\newcommand{\cH}{{\mathcal H}}
\newcommand{\CI}{{\mathcal I}}
\newcommand{\cJ}{{\mathcal J}}
\newcommand{\cK}{{\mathcal K}}
\newcommand{\cL}{{\mathcal L}}
\newcommand{\baL}{{\overline{\mathcal L}}}

\newcommand{\fI}{{\mathfrak I}}
\newcommand{\fJ}{{\mathfrak J}}
\newcommand{\fF}{{\mathfrak F}}
\newcommand{\Mf}{{\mathfrak M}}
\newcommand{\bM}{{\mathbf M}}
\newcommand{\bm}{{\mathbf m}}
\newcommand{\cN}{{\mathcal N}}
\newcommand{\theo}{\mathcal{O}}
\newcommand{\cP}{{\mathcal P}}
\newcommand{\cR}{{\mathcal R}}
\newcommand{\Pp}{{\mathbb P}}
\newcommand{\boldp}{{\mathbf P}}
\newcommand{\boldq}{{\mathbf Q}}
\newcommand{\bbL}{{\mathbf L}}
\newcommand{\cQ}{{\mathcal Q}}
\newcommand{\cO}{{\mathcal O}}
\newcommand{\Oo}{{\mathcal O}}
\newcommand{\cY}{{\mathcal Y}}
\newcommand{\OX}{{\Oo_X}}
\newcommand{\OY}{{\Oo_Y}}
\newcommand{\otY}{{\underset{\OY}{\ot}}}
\newcommand{\otX}{{\underset{\OX}{\ot}}}
\newcommand{\cU}{{\mathcal U}}\newcommand{\cX}{{\mathcal X}}
\newcommand{\cW}{{\mathcal W}}
\newcommand{\boldz}{{\mathbf Z}}
\newcommand{\qgr}{\operatorname{q-gr}}
\newcommand{\gr}{\operatorname{gr}}
\newcommand{\rk}{\operatorname{rk}}
\newcommand{\SH}{{\underline{\operatorname{Sh}}}}
\newcommand{\End}{\operatorname{End}}
\newcommand{\uEnd}{\underline{\operatorname{End}}}
\newcommand{\Hom}{\operatorname{Hom}}
\newcommand{\uHom}{\underline{\operatorname{Hom}}}
\newcommand{\uHomY}{\uHom_{\OY}}
\newcommand{\uHomX}{\uHom_{\OX}}
\newcommand{\Ext}{\operatorname{Ext}}
\newcommand{\bExt}{\operatorname{\bf{Ext}}}
\newcommand{\Tor}{\operatorname{Tor}}

\newcommand{\inv}{^{-1}}
\newcommand{\airtilde}{\widetilde{\hspace{.5em}}}
\newcommand{\airhat}{\widehat{\hspace{.5em}}}
\newcommand{\nt}{^{\circ}}
\newcommand{\del}{\partial}

\newcommand{\supp}{\operatorname{supp}}
\newcommand{\GK}{\operatorname{GK-dim}}
\newcommand{\hd}{\operatorname{hd}}
\newcommand{\id}{\operatorname{id}}
\newcommand{\res}{\operatorname{res}}
\newcommand{\lrar}{\leadsto}
\newcommand{\im}{\operatorname{Im}}
\newcommand{\HH}{\operatorname{H}}
\newcommand{\TF}{\operatorname{TF}}
\newcommand{\Bun}{\operatorname{Bun}}

\newcommand{\F}{\mathcal{F}}
\newcommand{\Ff}{\mathbb{F}}
\newcommand{\nthord}{^{(n)}}
\newcommand{\Gr}{{\mathfrak{Gr}}}

\newcommand{\Fr}{\operatorname{Fr}}
\newcommand{\GL}{\operatorname{GL}}
\newcommand{\gl}{\mathfrak{gl}}
\newcommand{\SL}{\operatorname{SL}}
\newcommand{\ff}{\footnote}
\newcommand{\ot}{\otimes}
\def\Ext{\operatorname {Ext}}
\def\Hom{\operatorname {Hom}}
\def\Ind{\operatorname {Ind}}
\def\bbZ{{\mathbb Z}}

\newcommand{\nc}{\newcommand}
\nc{\ol}{\overline} \nc{\cont}{\on{cont}} \nc{\rmod}{\on{mod}}
\nc{\Mtil}{\widetilde{M}} \nc{\wb}{\overline} 
\nc{\wh}{\widehat}  \nc{\mc}{\mathcal}
\nc{\mbb}{\mathbb}  \nc{\K}{{\mc K}} \nc{\Kx}{{\mc K}^{\times}}
\nc{\Ox}{{\mc O}^{\times}} \nc{\unit}{{\bf \on{unit}}}
\nc{\boxt}{\boxtimes} \nc{\xarr}{\stackrel{\rightarrow}{x}}

\nc{\Ga}{\G_a}
 \nc{\PGL}{{\on{PGL}}}
 \nc{\PU}{{\on{PU}}}

\nc{\h}{{\mathfrak h}} \nc{\kk}{{\mathfrak k}}
 \nc{\Gm}{\G_m}
\nc{\Gabar}{\wb{\G}_a} \nc{\Gmbar}{\wb{\G}_m} \nc{\Gv}{G^\vee}
\nc{\Tv}{T^\vee} \nc{\Bv}{B^\vee} \nc{\g}{{\mathfrak g}}
\nc{\gv}{{\mathfrak g}^\vee} \nc{\BRGv}{\on{Rep}\Gv}
\nc{\BRTv}{\on{Rep}T^\vee}
 \nc{\Flv}{{\mathcal B}^\vee}
 \nc{\TFlv}{T^*\Flv}
 \nc{\Fl}{{\mathfrak Fl}}
\nc{\BRR}{{\mathcal R}} \nc{\Nv}{{\mathcal{N}}^\vee}
\nc{\St}{{\mathcal St}} \nc{\ST}{{\underline{\mathcal St}}}
\nc{\Hec}{{\bf{\mathcal H}}} \nc{\Hecblock}{{\bf{\mathcal
H_{\alpha,\beta}}}} \nc{\dualHec}{{\bf{\mathcal H^\vee}}}
\nc{\dualHecblock}{{\bf{\mathcal H^\vee_{\alpha,\beta}}}}
\newcommand{\ramBun}{{\bf{Bun}}}
\newcommand{\ramBuno}{\ramBun^{\circ}}

\nc{\Buntheta}{{\bf Bun}_{\theta}} \nc{\Bunthetao}{{\bf
Bun}_{\theta}^{\circ}} \nc{\BunGR}{{\bf Bun}_{G_\BR}}
\nc{\BunGRo}{{\bf Bun}_{G_\BR}^{\circ}}
\nc{\HC}{{\mathcal{HC}}}
\nc{\risom}{\stackrel{\sim}{\to}} \nc{\Hv}{{H^\vee}}
\nc{\bS}{{\mathbf S}}
\def\BRep{\operatorname {Rep}}
\def\Conn{\operatorname {Conn}}

\nc{\Vect}{{\operatorname{Vect}}}
\nc{\Hecke}{{\operatorname{Hecke}}}

\newcommand{\ZZ}{{Z_{\bullet}}}
\nc{\HZ}{{\mc H}\ZZ} \nc{\eps}{\epsilon}

\nc{\CN}{\mathcal N} \nc{\BA}{\mathbb A}

 \nc{\BB}{\mathbb B}

\nc{\ul}{\underline}

\nc{\bn}{\mathbf n} \nc{\Sets}{{\on{Sets}}} \nc{\Top}{{\on{Top}}}
\nc{\IntHom}{{\mathcal Hom}}

\nc{\Simp}{{\mathbf \Delta}} \nc{\Simpop}{{\mathbf\Delta^\circ}}

\nc{\Cyc}{{\mathbf \Lambda}} \nc{\Cycop}{{\mathbf\Lambda^\circ}}

\nc{\Mon}{{\mathbf \Lambda^{mon}}}
\nc{\Monop}{{(\mathbf\Lambda^{mon})\circ}}

\nc{\Aff}{{\on{Aff}}} \nc{\Sch}{{\on{Sch}}}

\nc{\bul}{\bullet}
\nc{\module}{{\operatorname{-mod}}}

\nc{\dstack}{{\mathcal D}}

\nc{\BL}{{\mathbb L}}

\nc{\BD}{{\mathbb D}}

\nc{\BR}{{\mathbb R}}

\nc{\BT}{{\mathbb T}}

\nc{\SCA}{{\mc{SCA}}}
\nc{\DGA}{{\mc DGA}}

\nc{\DSt}{{DSt}}

\nc{\lotimes}{{\otimes}^{\mathbf L}}

\nc{\bs}{\backslash}

\nc{\Lhat}{\widehat{\mc L}}

\newcommand{\Coh}{\on{Coh}}

\nc{\QCoh}{QC}
\nc{\QC}{QC}
\nc{\Perf}{\on{Perf}}
\nc{\Cat}{{\on{Cat}}}
\nc{\dgCat}{{\on{dgCat}}}
\nc{\bLa}{{\mathbf \Lambda}}

\nc{\BRHom}{\mathbf{R}\hspace{-0.15em}\on{Hom}}
\nc{\BREnd}{\mathbf{R}\hspace{-0.15em}\on{End}}
\nc{\colim}{\on{colim}}
\nc{\oo}{\infty}
\nc{\Mod}{\on{Mod} }

\nc\fh{\mathfrak h}
\nc\al{\alpha}
\nc\la{\alpha}
\nc\BGB{B\bs G/B}
\nc\QCb{QC^\flat}
\nc\qc{\on{QC}}

\def\w{\wedge}
\nc{\vareps}{\varepsilon}

\nc{\fg}{\mathfrak g}

\nc{\Map}{\on{Map}} \nc{\fX}{\mathfrak X}

\nc{\ch}{\check}
\nc{\fb}{\mathfrak b} \nc{\fu}{\mathfrak u} \nc{\st}{{st}}
\nc{\fU}{\mathfrak U}
\nc{\fZ}{\mathfrak Z}
\nc{\fB}{\mathfrak B}

 \nc\fc{\mathfrak c}
 \nc\fs{\mathfrak s}

\nc\fk{\mathfrak k} \nc\fp{\mathfrak p}
\nc\fq{\mathfrak q}

\nc{\BRP}{\mathbf{RP}} \nc{\rigid}{\text{rigid}}
\nc{\glob}{\text{glob}}

\nc{\cI}{\mathcal I}

\nc{\La}{\mathcal L}

\nc{\quot}{/\hspace{-.25em}/}

\nc\aff{\mathit{aff}}
\nc\BS{\mathbb S}

\nc\Loc{{\mc Loc}}
\nc\Tr{{\on{Tr}}}
\nc\Ch{{\mc Ch}}

\nc\ftr{{\mathfrak {tr}}}
\nc\fM{\mathfrak M}

\nc\Id{\operatorname{Id}}

\nc\bimod{\on{-bimod}}

\nc\ev{\operatorname{ev}}
\nc\coev{\operatorname{coev}}

\nc\pair{\operatorname{pair}}
\nc\kernel{\operatorname{kernel}}

\nc\Alg{\operatorname{Alg}}

\nc\init{\emptyset_{\text{\em init}}}
\nc\term{\emptyset_{\text{\em term}}}

\nc\Ev{\on{Ev}}
\nc\Coev{\on{Coev}}

\nc\es{\emptyset}
\nc\m{\text{\it min}}
\nc\M{\text{\it max}}
\nc\cross{\text{\it cr}}
\nc\tr{\on{tr}}

\nc\perf{\on{-perf}}
\nc\inthom{\mathcal Hom}
\nc\intend{\mathcal End}

\newcommand{\Sh}{\mathit{Sh}}

\nc{\Comod}{\on{Comod}}
\nc{\cZ}{\mathcal Z}

\def\interiorsymbol {\on{int}}

\nc\frakf{\mathfrak f}
\nc\fraki{\mathfrak i}
\nc\frakj{\mathfrak j}
\nc\BP{\mathbb P}
\nc\stab{st}
\nc\Stab{St}

\nc\fN{\mathfrak N}
\nc\fT{\mathfrak T}
\nc\fV{\mathfrak V}

\nc\Ob{\on{Ob}}

\nc\fC{\mathfrak C}
\nc\Fun{\on{Fun}}

\nc\Null{\on{Null}}

\nc\BC{\mathbb C}

\nc\loc{\on{Loc}}

\nc\hra{\hookrightarrow}
\nc\fL{\mathfrak L}
\nc\R{\mathbb R}
\nc\CE{\mathcal E}

\nc\sK{\mathsf K}
\nc\sL{\mathsf L}
\nc\sC{\mathsf C}

\nc\Cone{\text{Cone}}

\nc\fY{\mathfrak Y}
\nc\fe{\mathfrak e}
\nc\ft{\mathfrak t}

\nc\wt{\widetilde}
\nc\inj{\mathit{inj}}
\nc\surj{\mathit{surj}}

\nc\Path{\mathit{Path}}
\nc\Set{\mathit{Set}}
\nc\Fin{\mathit{Fin}}

\nc\cyc{\mathit{cyc}}

\nc\per{\mathit{per}}

\nc\sym{\mathit{symp}}
\nc\con{\mathit{cont}}
\nc\gen{\mathit{gen}}
\nc\str{\mathit{str}}
\nc\rsdl{\mathit{res}}
\nc\impr{\mathit{impr}}
\nc\rel{\mathit{rel}}
\nc\pt{\mathit{pt}}
\nc\naive{\mathit{nv}}
\nc\forget{\mathit{For}}

\nc\sH{\mathsf H}
\nc\sW{\mathsf W}
\nc\sE{\mathsf E}
\nc\sP{\mathsf P}
\nc\sB{\mathsf B}
\nc\sS{\mathsf S}
\nc\fH{\mathfrak H}
\nc\fP{\mathfrak P}
\nc\fW{\mathfrak W}
\nc\fE{\mathfrak E}
\nc\sx{\mathsf x}
\nc\sy{\mathsf y}

\nc\ord{\mathit{ord}}

\nc\sm{\mathit{sm}}

\nc\rhu{\rightharpoonup}
\nc\dirT{\mathcal T}
\nc\dirF{\mathcal F}
\nc\link{\mathit{link}}
\nc\cT{\mathcal T}

\newcommand{\ssupp}{\mathit{ss}}
\newcommand{\cyl}{\mathit{Cyl}}
\newcommand{\ball}{\mathit{B(x)}}

 \nc\ssf{\mathsf f}
 \nc\ssg{\mathsf g}
\nc\sq{\mathsf q}
 \nc\sQ{\mathsf Q}
 \nc\sR{\mathsf R}

\nc\fa{\mathfrak a}
\nc\fA{\mathfrak A}

\nc\trunc{\mathit{tr}}
\nc\pre{\mathit{pre}}
\nc\expand{\mathit{exp}}

\nc\Sol{\mathit{Sol}}
\nc\direct{\mathit{dir}}

\nc\out{\mathit{out}}
\nc\Morse{\mathit{Morse}}
\nc\arb{\mathit{arb}}
\nc\prearb{\mathit{pre}}

\nc\BZ{\mathbb Z}
\nc\BN{\mathbb N}
\nc\proper{\mathit{prop}}
\nc\torsion{\mathit{tors}}
\nc\Perv{\mathit{Perv}}
\nc\IC{\operatorname{IC}}

\nc\Conv{\operatorname{Conv}}
\nc\Span{\operatorname{Span}}

\nc\image{\operatorname{image}}
\nc\lin{\mathit{lin}}

\nc\inverse{\operatorname{inv}}

\nc\real{\operatorname{Re}}
\nc\imag{\operatorname{Im}}

\nc\fw{\mathfrak w}

\nc\sign{\on{sgn}}
\nc\conic{\mathit{con}}

\nc\even{\mathit{ev}}
\nc\odd{\mathit{odd}}
\nc\add{\mathit{add}}

\nc\orient{\mathit{or}}

\nc\op{\mathit{op}}

\nc\beq{\begin{equation}}
\nc\eeq{\end{equation}}

\nc\bequn{\begin{equation*}}
\nc\eequn{\end{equation*}}

\nc\Cl{\mathit{C\ell}}
\nc\Chek{\mathit{Ch}}
\nc\vanish{\mathit{van}}

\nc\lar{\mathit{large}}
\nc\sma{\mathit{small}}
\nc\rest{\text{rest}}

\nc\cl{\curvearrowright}
\nc\ccl{\curvearrowleft}

\def\clrot{{\rotatebox[origin=c]{180}{$\cl$}}}
\def\cclrot{{\rotatebox[origin=c]{180}{$\ccl$}}}

\nc\mon{\mathit{mon}}
\nc\triv{\mathit{triv}}
\nc\invar{\mathit{\spadesuit}}

\nc{\canfromid}{\mathfrak{p}}
\nc{\cantoid}{\mathfrak{q}}
\nc{\adjmap}{\mathfrak{q}}
\nc{\Cyl}{\text{Cyl}}
\nc{\frakm}{\mathfrak{m}}
\nc{\crit}{\mathit{crit}}

\nc\sh{\sharp}
\nc\frF{\mathfrak F}

\title[Wall-crossing for toric mutations]{Wall-crossing for toric mutations}

\author{David Nadler}
\address{Department of Mathematics\\University of California, Berkeley\\Berkeley, CA  94720-3840}
\email{nadler@math.berkeley.edu}

\begin{abstract}
 This note explains how to deduce the  wall-crossing formula  for toric mutations established by Pascaleff-Tonkonog from the perverse schober of the corresponding local Landau-Ginzburg model.
 Along the way, we develop a general framework to extract a wall-crossing formula from a perverse schober  on the projective line with a single critical point.
 \end{abstract}

\maketitle


\tableofcontents


\section{Introduction}

Our aim in this note is 
 to deduce the wall-crossing formula   for toric mutations established by Pascaleff-Tonkonog~\cite{PT}
 from the perverse schober of the corresponding local Landau-Ginzburg model calculated in~\cite{Nlgps} (see also \cite{N3d} for a concrete approach to the three-dimensional case).
Along the way, we develop a general framework (see Theorem~\ref{thm:intro}) to extract a wall-crossing formula from a perverse schober $\cP$ on the projective line $\BC\BP^1$ with a single critical point $0\in \BC\BP^1$.  
The wall-crossing formula applies to the moduli of objects of the inertia category of  the clean objects  (see Definition~\ref{def:clean}) in the generic fiber  of $\cP$.

\subsection{Wall-crossing formula}\label{s:wallcross}

The  wall-crossing formula for toric mutations established by Pascaleff-Tonkonog~\cite{PT} 
 is the birational map 
\begin{align}
y_i & \mapsto   y_i, \hspace{1em} i=1, \ldots, n-1\label{eq:mut}\\
y_n & \mapsto  y_n^{-1}(1 + y_1 + \cdots + y_{n-1})\nonumber
\end{align}
associated to the mutation $L\rightsquigarrow \mu_D(L)$ of an $n$-dimensional Lagrangian torus $L\simeq (S^1)^n$ around
a singular thimble $D = \Cone(T^\vanish)$ attached to a vanishing $(n-1)$-dimensional  torus $ T^\vanish \simeq (S^1)^{n-1}\subset L$.

The variables $y_i$, $i=1,\ldots, n$, are  simultaneously coordinates on the moduli $\BT^\vee\simeq (\Gm)^n$ of rank one local systems on the Lagrangian torus $L\simeq (S^1)^{n}$ as well as  its mutation $ \mu_D(L)\simeq (S^1)^{n}$ under a natural identification. In particular, 
the variables $y_i$, $i=1,\ldots, n-1$, are  simultaneously coordinates on the moduli $\BT^\vee_0\simeq (\Gm)^{n-1}$ of rank one local systems on the vanishing torus $T^\vanish\simeq (S^1)^{n-1}$ as well as  its mutation $ \mu_D(T^\vanish)\simeq (S^1)^{n-1}$.
%

When $n=2$, the vanishing torus $T^\vanish$ is a circle, the thimble $D = \Cone(T^\vanish)$ is a smooth
disk, and the geometry is that of  a  traditional ``mutation configuration"~\cite[Def. 4.10]{PT}. In general, the geometry is given by the following local model (adapted from~\cite[Sect. 5.4]{PT} to suit our further considerations).


\subsubsection{Local geometry}\label{s:local geom} 

Let $z$ be a coordinate on $\BC$, $z_1, \ldots, z_n$ coordinates on $\BC^n$, and fix the function  $W:\BC^n\to \BC$, $W=z_1\cdots z_n$.

Fix once and for all $\epsilon>0$, and
introduce the symplectic manifold
$ 
M =  \{W\not =  \epsilon\}  \subset \BC^n.
$

\begin{figure}[h!]
\centering
\includegraphics[scale=0.75, trim={5cm 17.25cm 5cm 4cm},clip]{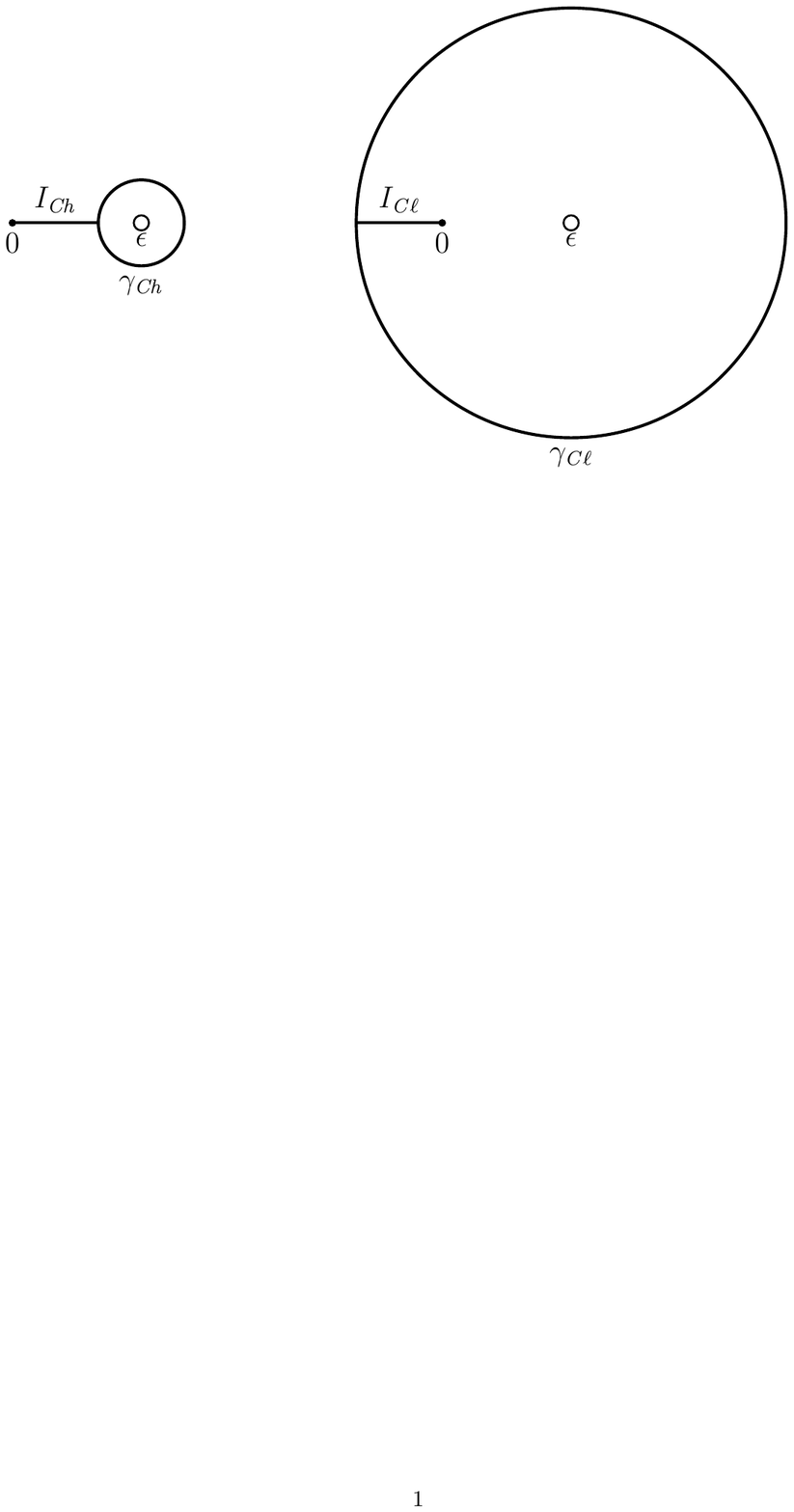}
\caption{Chekanov graph $\Gamma_\Ch$ and  Clifford graph $\Gamma_\Cl$.}
\label{f:skel}
\end{figure}

For a guide to the following constructions, see Figure~\ref{f:skel}.
For any $\rho>0$, $\rho \not = \epsilon$, consider the circle and corresponding Lagrangian torus
\beq 
\xymatrix{
\gamma_\rho = \{|z - \epsilon| = \rho\} \simeq S^1 \subset \BC
&
T_\rho =  \{  W \in \gamma_{\rho}, |z_1| = \cdots = |z_n|\}  \subset M
}\eeq
In particular, for fixed $\rho_\Ch\in (0, \epsilon)$ and $\rho_\Cl\in (\epsilon, \oo)$,  we have the
Chekanov and Clifford circles
and corresponding
 $n$-dimensional Chekanov and Clifford  tori 
 \beq
 \xymatrix{
\gamma_\Ch := \gamma_{\rho_\Ch}
 &
 T_\Ch := T_{\rho_\Ch}
&
 \gamma_\Cl := \gamma_{\rho_\Cl}
 &
 T_\Cl := T_{\rho_\Cl}
}
\eeq

Consider as well the closed intervals 
 \beq
 \xymatrix{
I_\Ch =[0, \epsilon - \rho_\Ch]
 &
I_\Cl = [\epsilon - \rho_\Cl, 0]
}
\eeq
and the corresponding  singular thimbles
\beq
\xymatrix{
D_\Ch =  \{  W \in I_\Ch, |z_1| = \cdots = |z_n|\} \simeq \Cone(T^\vanish_\Ch)}
\eeq
\beq
\xymatrix{
D_\Cl =  \{  W \in I_\Cl, |z_1| = \cdots = |z_n|\}   \simeq  \Cone(T^\vanish_\Cl)
}\eeq
with respective boundaries the vanishing tori
\beq
\xymatrix{
 \partial D_\Ch = T^\vanish_\Ch  =   \{  W =\epsilon-\rho_\Ch, |z_1| = \cdots = |z_n|\}
}
\eeq
\beq
\xymatrix{
 \partial D_\Cl = T^\vanish_\Cl  =   \{  W =\epsilon-\rho_\Cl, |z_1| = \cdots = |z_n|\}
}\eeq

It will also be useful to 
  introduce the Chekanov and Clifford graphs
\beq
\xymatrix{
\Gamma_\Ch = \gamma_{\Ch} \cup I_\Ch
&
\Gamma_\Cl = \gamma_{\large} \cup I_\Cl}
\eeq
which are skeleta of $\BC\setminus \{\epsilon\} \simeq \BC^\times$,
and the  corresponding  Chekanov and Clifford skeleta 
\beq
\xymatrix{
L_\Ch =  \{  W \in \Gamma_\Ch, |z_1| = \cdots = |z_n|\}  
&
L_\Cl =  \{  W \in \Gamma_\Cl, |z_1| = \cdots = |z_n|\}  
}
\eeq
of the symplectic manifold $M =  \{W\not =  \epsilon\}$ itself.
%
%
%

One says the tori $T_\Ch$,
$T_\Cl$ mutate into each other $T_\Cl= \mu_{D_\Ch}(T_\Ch)$, $T_\Ch= \mu_{D_\Cl}(T_\Cl)$ around the respective thimbles $D_\Ch, D_\Cl$ as the radius $\rho$ passes through $\epsilon$ and  the circle $\gamma_\rho$ passes through the critical value $0\in \BC$.

\begin{remark}\label{rem:crit}
We could also work directly with the graph $\Gamma_\crit = \gamma_\epsilon$ given by the circle of critical radius $\epsilon$, and the corresponding critical skeleton
\beq
\xymatrix{
L_\crit =  \{  W \in \Gamma_\crit, |z_1| = \cdots = |z_n|\}  
}
\eeq
It is the union of the two thimbles
\beq
\xymatrix{
D^\pm_\crit =  \{  W \in I_\crit^\pm, |z_1| = \cdots = |z_n|\} \simeq \Cone(T^\vanish_\crit)} 
\eeq
over the two semi-circles $I_\crit^\pm = \{ z\in \Gamma_\crit | \pm \on{Im}(z)\geq 0\}$, with
each thimble the cone over the the same vanishing torus
\beq
\xymatrix{
\partial D^\pm_\crit = T^\vanish_\crit =  \{  W = 2\epsilon,  |z_1| = \cdots = |z_n|\}  
}
\eeq

\begin{figure}[h!]
\centering
\includegraphics[scale=0.75, trim={5cm 18.75cm 11.5cm 4cm},clip]{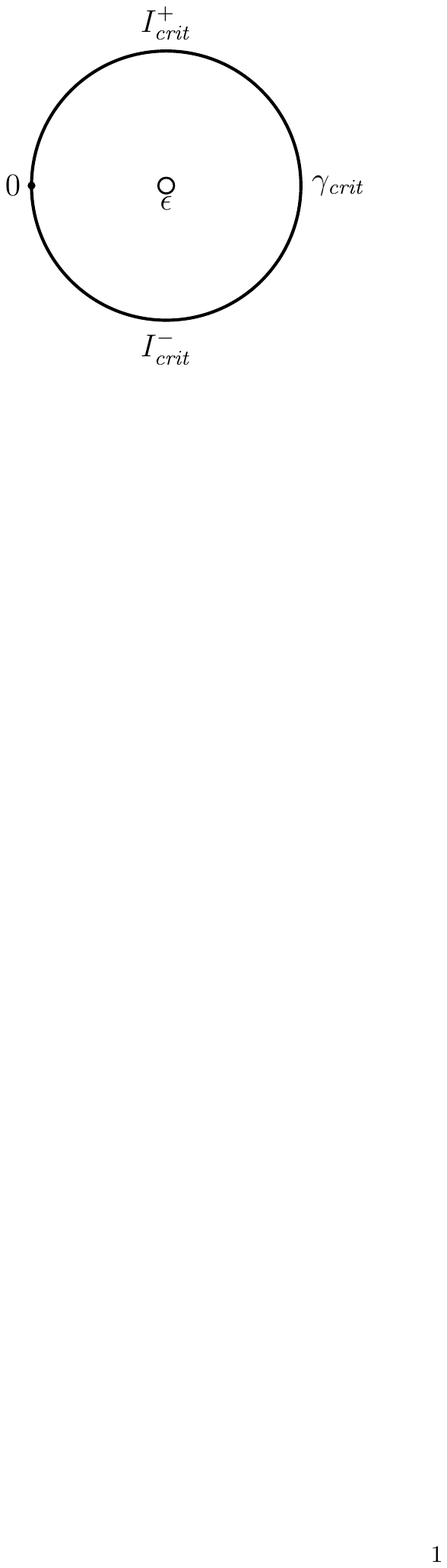}
\caption{Critical graph $\Gamma_\crit$.}
\label{f:critskel}
\end{figure}

\end{remark}

\subsection{Landau-Ginzburg model}\label{s:lgps}

Let us consider the 
Landau-Ginzburg $A$-model of $\BC^n$ with superpotential $W =z_1\cdots z_n$.
By results of~\cite{Nlgps}, its branes are naturally organized by a perverse schober on the disk $D = \{|z| <\epsilon\} \subset \BC\BP^1$ with a single critical point at $0\in D$.
(One expects any Landau-Ginzburg model to similarly give a perverse schober.)

Following Kapranov-Schechtman~\cite{KapS},
a perverse schober  on a disk  $D$ with a single critical point at $0\in D$,
in its minimal one-cut realization, 
 is a spherical functor
\beq
\xymatrix{
\cP_{D, 0}=(S:\cD_\Phi \to  \cD_\Psi)
}
\eeq
from a stable  ``vanishing" dg category $\cD_\Phi$ to  a stable  ``nearby"  dg category $\cD_\Psi$.
(See Sect.~\ref{s: ps} below for more details.)

For the Landau-Ginzburg $A$-model of $\BC^n$ with superpotential $W =z_1\cdots z_n$,
consider the thimble and and cylinder respectively
\beq
\xymatrix{
L =
  \{  W \in \BR_{\geq 0}, |z_1| = \cdots = |z_n|\}  \subset \BC^n
  }
  \eeq
  \beq
  \xymatrix{
L^\circ =
  \{  W \in \BR_{>0} , |z_1| = \cdots = |z_n|\}  \subset \BC^n    
}
\eeq
A main technical result of~\cite{Nlgps} is that we obtain a perverse schober 
\beq\label{eq:aps}
\xymatrix{
\cP_A=(j^*:\mu\Sh_{L}(\BC^n)\ar[r] &  \mu\Sh_{L^\circ}(\BC^n))
}
\eeq
 by taking
 dg categories of microlocal  sheaves
along the  thimble and cylinder,
with their natural restriction  under the open inclusion $j:L^\circ \hra L$. 
 
Next, let us describe the mirror perverse schober. Introduce the torus $\BT^\vee = (\Gm)^n$, with coordinates $x_1, \ldots, x_n$, and its quotient $\BT^\vee_0  = \BT^\vee/\Gm \simeq (\Gm)^{n-1}$ by the diagonal, with coordinates $y_1 = x_1/x_n, \ldots, y_{n-1} = x_1/x_n$.
Consider the inclusion of the pair of pants
\beq
\xymatrix{
i:P_{n-2} = \{1 +  y_1 + \cdots + y_{n-1} = 0\} \ar@{^(->}[r] & \BT^\vee_0
}
\eeq
Then it is elementary to check we  obtain a perverse schober
\beq\label{eq:bps}
\xymatrix{
\cP_B=(i_*:\Perf_\proper(P_{n-2})\ar[r] &  \Perf_\proper(\BT^\vee_0))
}
\eeq
 by taking dg categories of perfect complexes with proper support and the usual pushforward.

A main result of \cite{Nlgps}  is the following mirror equivalence.

\begin{thm}[\cite{Nlgps}]\label{thm:lgps}
There is an equivalence of perverse schobers $\cP_A\risom \cP_B$, i.e. a commutative diagram with vertical equivalences
\beq\label{eq:msps}
\xymatrix{
\ar[d]^-\sim\mu\Sh_{L}(\BC^n)\ar[r]^-{j^*} &  \mu\Sh_{L^\circ}(\BC^n) \ar[d]^-\sim\\
\Perf_\proper(P_{n-2})\ar[r]^-{i_*} &  \Perf_\proper(\BT^\vee_0)
}
\eeq
\end{thm}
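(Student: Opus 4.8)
The plan is to build the equivalence of perverse schobers \eqref{eq:msps} in three steps: identify the nearby categories $\mu\Sh_{L^\circ}(\BC^n)$ and $\Perf_\proper(\BT^\vee_0)$; identify the vanishing categories $\mu\Sh_{L}(\BC^n)$ and $\Perf_\proper(P_{n-2})$; and then check that the two identifications intertwine the microlocal restriction $j^*$ with the pushforward $i_*$, so that the pair of equivalences upgrades to an equivalence of spherical functors, hence of perverse schobers. The nearby step is the elementary one. The cylinder $L^\circ = \{W \in \BR_{>0},\, |z_1| = \cdots = |z_n|\}$ deformation retracts onto a single torus $\{W = c,\, |z_1| = \cdots = |z_n|\} \cong (S^1)^{n-1}$, which is Lagrangian in the smooth fiber $W^{-1}(c) \cong (\Cx)^{n-1}$; since $\mu\Sh$ is a cosheaf of categories on the smooth manifold $L^\circ$, the category $\mu\Sh_{L^\circ}(\BC^n)$ is that of perfect local systems on $(S^1)^{n-1}$, which Fourier-Mukai / $T$-duality identifies with finitely supported perfect complexes on the dual algebraic torus, i.e. $\Perf_\proper((\Gm)^{n-1}) = \Perf_\proper(\BT^\vee_0)$, the nearby category of $\cP_B$.

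The vanishing step is the substance of the theorem. The thimble $L = \Cone(T^\vanish) = \{W \in \BR_{\geq 0},\, |z_1| = \cdots = |z_n|\}$ is the full Weinstein skeleton of the Landau-Ginzburg $A$-model $(\BC^n, W = z_1 \cdots z_n)$, and the required identification $\mu\Sh_L(\BC^n) \simeq \Perf_\proper(P_{n-2})$ is a form of homological mirror symmetry for the $(n-2)$-dimensional pair of pants $P_{n-2} = \{1 + y_1 + \cdots + y_{n-1} = 0\}$, i.e. the complement of $n$ hyperplanes in general position in $\Pp^{n-2}$. I would prove it by descent for the cosheaf of categories $\mu\Sh$: cover $L$ by the pieces lying over the open and closed strata of the half-line $\BR_{\geq 0}$ and over the faces of the orbit stratification of $\{|z_1| = \cdots = |z_n|\}$; away from the cone point $z_1 = \cdots = z_n = 0$ the microlocal sheaf category on each chart is, up to shift, a product of a local-system category with a skyscraper category, and the cone point supplies the one nontrivial gluing datum; one then matches the resulting limit diagram with the standard resolution of $\Perf_\proper(P_{n-2})$ by the coordinate charts of $P_{n-2}$. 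Equivalently, one feeds the same combinatorics into the coherent-constructible correspondence attached to the toric data defining $W$. This is a genuine homological-mirror-symmetry input rather than a formality, and I expect it to be the main obstacle.

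For the compatibility step, the open inclusion $j : L^\circ \hra L$ induces microlocal corestriction away from the cone point of $L$ (the locus $W = 0$), and one must see that under the two equivalences above it becomes the proper pushforward $i_* : \Perf_\proper(P_{n-2}) \to \Perf_\proper(\BT^\vee_0)$ along the smooth divisor $P_{n-2} \hra \BT^\vee_0$. Both $j^*$ and $i_*$ are structure functors of spherical adjunctions, so it is enough to match their right adjoints $j_*$ and $i^{!}$ together with the induced monodromy (cotwist) endofunctors around $0$; concretely this reduces to identifying the cone of the unit $\id \to j_* j^*$ (local vanishing cycles along $L$) with the cone of $\id \to i^{!} i_*$, which is the standard Koszul cotwist of the divisor $P_{n-2}$, and then verifying the match on a generating set of objects. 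The recurring subtlety is that one wants an equivalence of the whole perverse-schober datum --- a commuting square carrying the correct functors --- not merely equivalences of the four underlying categories.
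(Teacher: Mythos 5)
This theorem is not proved in the paper you are working from: it is quoted verbatim from \cite{Nlgps}, and the only argument the present paper supplies is the remark immediately following the statement, which explains how the equivalence is \emph{pinned down} rather than how it is \emph{constructed}. So there is no in-paper proof to compare yours against line by line; what I can assess is whether your outline is a faithful account of what must go into the cited result and of the normalization the paper actually uses.

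On that score your architecture is right, and your nearby-category step agrees with the paper's remark: $L^\circ \simeq T^\vanish \times \BR_{>0}$, so $\mu\Sh_{L^\circ}(\BC^n) \simeq \Loc(T^\vanish)$, which is canonically $\Perf_\proper(\BT^\vee_0)$ since $\BT^\vee_0$ \emph{is} the moduli of rank one local systems on $T^\vanish$. You are also right to flag the vanishing-category equivalence $\mu\Sh_L(\BC^n)\simeq \Perf_\proper(P_{n-2})$ as the genuine homological-mirror-symmetry input: this is precisely the main technical theorem of \cite{Nlgps}, and your descent-over-strata sketch is at best a plausible roadmap for it, not a proof --- which you acknowledge, and which is the honest thing to say given that no argument of reasonable length could reproduce it. Where you diverge from the paper is the compatibility step. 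You propose matching the right adjoints $j_*$, $i^!$ and the resulting cotwists on generators; the paper's remark takes a lighter route: fix the right vertical equivalence canonically as above, and then observe that because $j^*$ and $i_*$ are \emph{conservative}, any filling of the commutative square by a left vertical equivalence is unique, so the vanishing-category equivalence of \cite{Nlgps} is automatically normalized by the nearby one. Your adjoint-matching strategy would also work and carries more information (it verifies sphericity data explicitly), but it is doing work the conservativity observation makes unnecessary for the statement as formulated. In short: your proposal is a reasonable blind reconstruction, but its central step is an import from \cite{Nlgps} that neither you nor the paper actually proves here.
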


\begin{remark}
To pin down the equivalence of the theorem, consider the vanishing torus
\beq
\xymatrix{
T^\vanish =
  \{  W =1, |z_1| = \cdots = |z_n|\}  \subset \BC^n
}
\eeq
and note the
 natural diffeomorphism  $L^\circ \simeq T_\vanish \times \BR_{>0}$.  Note we can canonically identify $\BT^\vee_0$ with the moduli of rank one local systems on $T^\vanish$, and thus  obtain
a  canonical equivalence $\mu\Sh_{L^\circ}(\BC^n) \simeq \Perf_\proper(\BT^\vee_0)$.
This is the right vertical arrow in~\eqref{eq:msps}; it fixes the left vertical arrow, since the functors $j^*, i_*$ are conservative.
 \end{remark}


\subsection{Main result}


We  sketch here  a general wall-crossing formula associated to any perverse schober $\cP_{S^2, 0}$ on the sphere $S^2\simeq \BC\BP^1$ with a single critical point $0\in S^2$. 
We will regard
 such a perverse schober as a pair
\beq
\xymatrix{
\cP_{S^2, 0}=(S:\cD_\Phi \to  \cD_\Psi, \tau:\id \risom T_{\Psi, r})
}
\eeq
 of a spherical functor $S$ and a trivialization $\tau$ of the monodromy functor  $T_{\Psi, r}:\cD_\Psi\risom \cD_\Psi$
 of the nearby category.

Introduce the full subcategory $\cD_\invar\subset \cD_\Psi$ of {\em clean objects} of the nearby category.
We say an object $Y\in \cD_\Psi$ is clean if  we have $S^r(Y) \simeq 0$ (equivalently, $S^\ell(Y) \simeq 0$) for the right adjoint $S^r$ (or left adjoint $S^\ell$) of $S$. 

\begin{remark}
The terminology ``clean" is motivated by the following. If instead of a spherical functor  $S:\cD_\Phi \to  \cD_\Psi$, 
suppose we work with a spherical pair, in the sense of a semi-orthogonal decomposition 
\beq
\xymatrix{
\cD^-_\Phi  \ar[r]^-{I_!} &  \ar@/_1.25em/[l]_-{I^!} \ar@/^1em/[l]^-{I^*}   \cD \ar[r]^-{J^*}& \ar@/_1.25em/[l]_-{J_*} \ar@/^1em/[l]^-{J_!} \cD^+_\Psi
}
\eeq
Then an object $Y\in \cD^+_\Psi$ is clean if and only if 
the canonical map $J_!Y \to J_*Y$ is an isomorphism. 
\end{remark}

Let $\cL\cD_\invar$ denote the {\em inertia dg category} of pairs $(Y, m)$ consisting of $Y\in \cD_\invar$, and an automorphism $m:Y\risom Y$.
For  any automorphism $\mathfrak m$ of the identity 
functor of $\cD_\invar$, we have the natural translated inverse functor
\beq
\xymatrix{
K_\frakm:\cL \cD_\invar  \ar[r]^-\sim & \cL \cD_\invar 
&
K_{\frakm}(Y, m) = (Y, m^{-1}\circ \frakm)
}
\eeq

Finally,  consider the restriction 
\beq
\cP_{\Cyl, 0} = \cP_{S^2, 0}|_\Cyl
\eeq
 of the perverse schober $\cP_{S^2, 0}$ to the cylinder
$\Cyl = \BC \setminus\{\epsilon\} \simeq \BC\BP^1 \setminus \{\epsilon, \oo\}$,
and denote by $\Gamma(\Cyl, \cP_{\Cyl, 0})$  its global sections.

Here is an abstract statement of our main result.

\begin{thm}\label{thm:intro}
 There are natural fully faithful Chekanov and Clifford embeddings fitting into a commutative diagram
 \beq\label{intro: thm diag}
\xymatrix{
\ar[dd]_-{K_\frakm}^-\sim \cL \cD_\invar   \ar@{^(->}[dr]^-{F_\Ch} & \\
& \Gamma(\Cyl, \cP_{\Cyl, 0})\\
\cL \cD_\invar \ar@{^(->}[ru]^-{F_\Cl} &
}
\eeq 
where $\frakm$ is the automorphism of the identity 
\beq
\xymatrix{
\frakm:\id \ar[r]^{\canfromid_\Psi} & T_{\Psi, r} \ar[r]^{\tau^{-1}} & \id
}\eeq
given by the composition of the canonical map $\canfromid_\Psi$ associated to the spherical functor $S$, and the inverse of the trivialization $\tau$
of the nearby monodromy functor.
\end{thm}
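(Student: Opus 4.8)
The plan is to construct the two embeddings $F_\Ch$ and $F_\Cl$ by exploiting the two standard decompositions of the cylinder $\Cyl = \BC\BP^1\setminus\{\epsilon,\oo\}$ into a contractible piece containing $0$ and a neighborhood of the puncture $\oo$. On the Chekanov side, I would use the interval $I_\Ch = [0,\epsilon-\rho_\Ch]$ and circle $\gamma_\Ch$ of Section~\ref{s:local geom}; on the Clifford side the interval $I_\Cl = [\epsilon-\rho_\Cl,0]$ and circle $\gamma_\Cl$. In each case the global sections $\Gamma(\Cyl,\cP_{\Cyl,0})$ can be computed as a limit (a homotopy pullback) over the corresponding cell decomposition: a copy of the nearby category $\cD_\Psi$ attached along the relevant arc, glued to a copy of $\cD_\Psi$ near $\oo$ along the circle, with the gluing twisted by the monodromy $T_{\Psi,r}$ and corrected by the trivialization $\tau$. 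Concretely, $\Gamma(\Cyl,\cP_{\Cyl,0})$ is equivalent to the category of pairs $(Y,m)$ with $Y\in\cD_\Psi$ and $m$ an isomorphism between the two ways of transporting $Y$ around the loop enclosing $0$, one of which involves the spherical cotwist/monodromy and hence the canonical map $\canfromid_\Psi$, and the other the trivialization $\tau$. Restricting $Y$ to lie in $\cD_\invar$ — i.e.\ imposing $S^r(Y)\simeq 0$ — precisely kills the contribution of the vanishing cycle $\cD_\Phi$ along the arc, so that the remaining datum is an honest automorphism of $Y$, yielding a copy of $\cL\cD_\invar$ mapping fully faithfully into $\Gamma(\Cyl,\cP_{\Cyl,0})$.

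The key steps, in order, are: (1) unwind the Kapranov--Schechtman description of $\cP_{\Cyl,0}$ as a constructible cosheaf of categories on $\Cyl$ in its one-cut realization, and identify $\Gamma(\Cyl,\cP_{\Cyl,0})$ with the lax/twisted equalizer of $\cD_\Psi$ under the nearby monodromy $T_{\Psi,r}$, using $\tau$ to rigidify; (2) observe that on the locus of clean objects, the spherical adjunction data degenerate ($S^r Y\simeq S^\ell Y\simeq 0$), so that the relevant structure maps become identities and the twisted equalizer collapses to the inertia category $\cL\cD_\invar$; (3) define $F_\Ch$ as the resulting inclusion coming from the arc $I_\Ch$ approaching $0$ from the positive-real side, and $F_\Cl$ from the arc $I_\Cl$ approaching from the opposite side; (4) compare the two: because the two arcs differ by a half-turn around $0$, the identifications of $\Gamma(\Cyl,\cP_{\Cyl,0})$ they induce differ by transporting the automorphism datum through half the monodromy, which after using $\tau$ is exactly precomposition with $\frakm = \tau^{-1}\circ\canfromid_\Psi$ and inversion; this is the content of $K_\frakm$, and it closes the triangle~\eqref{intro: thm diag}.

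Full faithfulness of $F_\Ch$ and $F_\Cl$ should follow formally: mapping spaces in $\Gamma(\Cyl,\cP_{\Cyl,0})$ are computed by a fiber sequence whose ``error term'' involves $\Hom(S^r Y, -)$ or $\Hom(-,S^\ell Y')$, which vanishes on clean objects, leaving exactly the mapping spaces in $\cL\cD_\invar$. The commutativity of~\eqref{intro: thm diag} reduces to a bookkeeping identity relating the two local trivializations of the monodromy gerbe near $0$: going around $0$ once is $T_{\Psi,r}$, which via $\tau$ is trivialized, and the canonical natural transformation $\canfromid_\Psi:\id\Rightarrow T_{\Psi,r}$ of the spherical functor supplies precisely the discrepancy between ``transport the automorphism $m$'' on the Chekanov chart and ``transport $m^{-1}$'' on the Clifford chart; chasing this through gives $(Y,m)\mapsto(Y,m^{-1}\circ\frakm)$.

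\textbf{Main obstacle.} I expect the principal difficulty to be step~(1): pinning down, coherently and at the $\infty$-categorical level, the description of $\Gamma(\Cyl,\cP_{\Cyl,0})$ as a twisted equalizer, including the precise role of $\tau$ and of the canonical map $\canfromid_\Psi$, so that the half-monodromy comparison in step~(4) produces \emph{exactly} $\frakm = \tau^{-1}\circ\canfromid_\Psi$ and not some other representative of the same homotopy class. Equivalently: organizing the cutting of $\BC\BP^1$ with one marked point so that the two cuts corresponding to the Chekanov and Clifford configurations are manifestly related by the spherical twist, and verifying that the clean-object condition is exactly what trivializes the vanishing-cycle contribution on the nose. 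The Lagrangian-skeleton picture of Figure~\ref{f:skel} — with $\Gamma_\Ch$ and $\Gamma_\Cl$ as the two skeleta of $\BC\setminus\{\epsilon\}$ meeting along $\gamma_\epsilon$ — is the geometric shadow of this bookkeeping and should guide the choice of charts; once the diagrammatics are set up correctly, the remaining verifications are formal manipulations with spherical adjunctions.
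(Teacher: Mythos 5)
Your overall strategy---two presentations of $\Gamma(\Cyl,\cP_{\Cyl,0})$ attached to the two skeleta $\Gamma_\Ch$ and $\Gamma_\Cl$, the clean condition killing the vanishing-cycle contribution, and $\tau^{-1}\circ\canfromid_\Psi$ emerging as the discrepancy between the two charts---is indeed the paper's. But the step you defer as the ``main obstacle'' is the entire mathematical content of the proof, and the proposal does not contain the mechanism that resolves it. The paper sidesteps the $\infty$-categorical twisted-equalizer formalism altogether: it takes as the \emph{definition} of $\Gamma(\Cyl,\cP_{\Cyl,0})$ the category of quintuples $(X,Y_1,Y_2,\Delta,m)$ with $\Delta$ a triangle $Y_1[-1]\to S(X)\to Y_2\to Y_1$ and $m:Y_1\risom M(Y_2)$ (the $\Gamma_\Ch$-model), then constructs by hand a second ``mutated'' model $\Gamma^\sh$ (the $\Gamma_\Cl$-model) whose triangles have the form $T_{\Psi,r}(Y_1^\sh)[-1]\to S(X^\sh)\to Y_2^\sh\to T_{\Psi,r}(Y_1^\sh)$, and an explicit equivalence $\frF^\sh:\Gamma\to\Gamma^\sh$ built from the spherical adjunction: one sets $X^\sh:=\Cone(\tilde p)$ for the adjoint $\tilde p:X\to S^r(Y_2)$ of $p$, and produces the new triangle by taking cones on the commutative triangle relating $p$, $S(\tilde p)$ and the counit $c_r$, against the defining triangle $SS^r\to\id\to T_{\Psi,r}$. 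The theorem's diagram is then the composite $\frF^\sh\circ F_\Ch$ versus $F_\Cl\circ K_\frakm$ (Proposition~\ref{p:intertwine}).

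The precise point your step (4) needs---and which you leave open---is Remark~\ref{key rem}: the connecting map $\tilde\canfromid_\Psi:Y_1\to T_{\Psi,r}(Y_2)$ of the mutated triangle satisfies $\tilde\canfromid_\Psi\circ\partial=\canfromid_\Psi$, so when the input is the split quintuple $F_\Ch(Y,m)$ (where $X=0$ and $\partial=\id$) the connecting map of $\frF^\sh(F_\Ch(Y,m))$ is \emph{exactly} $\canfromid_\Psi$, not merely a representative of its homotopy class. Since $\canfromid_\Psi$ is invertible on clean objects, this mutated quintuple is then isomorphic to a split one via the map of triangles with $(g_1,g_2)=(\id,\frakm)$, and matching the residual isomorphism data gives precisely $m\mapsto m^{-1}\circ\frakm$. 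Without this cone computation (or an equivalent), your argument only identifies the wall-crossing twist up to an undetermined choice, whereas the exact formula for $\frakm$ is the point of the theorem. One smaller correction: full faithfulness is not obtained in the paper by a $\Hom(S^rY,-)$ vanishing argument as you suggest, but by the elementary observation that a quintuple with $X=0$ forces $\partial$ to be an isomorphism identifying $Y_1$ with $Y_2$, so that morphisms of such quintuples reduce to morphisms in the inertia/path category (Lemma~\ref{l:equiv ch}); the clean condition enters only in the intertwining step, not in full faithfulness.
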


\begin{corollary}[Wall-crossing formula]\label{cor:wc}
Under the composition  $F^{-1}_\Cl \circ F_\Ch$, objects of the inertia category $\cL \cD_\invar$ undergo the transformation
\beq\label{eq:intro abstract wc}
\xymatrix{
(Y, m) \ar@{|->}[r] & (Y, m^{-1} \circ \frakm)
}
\eeq
\end{corollary}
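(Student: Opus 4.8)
The plan is to derive Corollary~\ref{cor:wc} as a formal consequence of Theorem~\ref{thm:intro}, so no genuinely new geometric input is needed; the work is purely bookkeeping with the commutative diagram~\eqref{intro: thm diag}. First I would observe that since $F_\Ch$ and $F_\Cl$ are fully faithful, they are equivalences onto their essential images inside $\Gamma(\Cyl, \cP_{\Cyl, 0})$, so the expression $F_\Cl^{-1}\circ F_\Ch$ makes sense as an equivalence between (a subcategory identified with) $\cL\cD_\invar$ and itself --- provided the two essential images coincide. I would dispatch this point up front: the commutativity of~\eqref{intro: thm diag} together with $K_\frakm$ being an equivalence forces $\image(F_\Ch) = \image(F_\Cl\circ K_\frakm) = \image(F_\Cl)$, since $K_\frakm$ is essentially surjective. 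Hence $F_\Cl^{-1}\circ F_\Ch$ is a well-defined self-equivalence of $\cL\cD_\invar$.

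Next I would simply read off the formula. Commutativity of~\eqref{intro: thm diag} says $F_\Cl \circ K_\frakm = F_\Ch$ as functors $\cL\cD_\invar \to \Gamma(\Cyl, \cP_{\Cyl, 0})$ (up to the specified natural isomorphism), whence $F_\Cl^{-1}\circ F_\Ch = K_\frakm$. Therefore the transformation induced on objects by $F_\Cl^{-1}\circ F_\Ch$ is exactly the one induced by $K_\frakm$, and by the definition of $K_\frakm$ recalled just before Theorem~\ref{thm:intro}, namely $K_\frakm(Y,m) = (Y, m^{-1}\circ\frakm)$, this is precisely~\eqref{eq:intro abstract wc}. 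With $\frakm$ the automorphism of the identity functor $\tau^{-1}\circ\canfromid_\Psi$ as specified in Theorem~\ref{thm:intro}, one gets the asserted formula verbatim.

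The only subtlety --- and the step I expect to require the most care --- is that ``$F_\Cl^{-1}$'' is only defined after choosing a quasi-inverse to $F_\Cl$ on its essential image, so the identity $F_\Cl^{-1}\circ F_\Ch = K_\frakm$ holds on the nose only up to a natural isomorphism coming from the triangle $F_\Cl\circ F_\Cl^{-1}\simeq \Id$; at the level of isomorphism classes of objects this ambiguity evaporates, which is why the Corollary is phrased as a statement about the transformation $(Y,m)\mapsto(Y,m^{-1}\circ\frakm)$ on objects rather than an equality of functors. I would state this explicitly so that no reader worries about coherence data: the content of the Corollary is the object-level map, and that map is literally $K_\frakm$ on objects by~\eqref{intro: thm diag}. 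One should also note that $\frakm$, being an automorphism of the identity functor of $\cD_\invar$, acts on every object $Y$, so $m^{-1}\circ\frakm$ (shorthand for $m^{-1}\circ\frakm_Y$) is again an automorphism of $Y$ and the pair $(Y,m^{-1}\circ\frakm)$ is a bona fide object of $\cL\cD_\invar$, confirming that~\eqref{eq:intro abstract wc} does define an endofunctor. This completes the deduction.
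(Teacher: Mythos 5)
Your deduction is correct and is essentially the paper's: Corollary~\ref{cor:wc} is an immediate formal consequence of the commutative diagram~\eqref{intro: thm diag} of Theorem~\ref{thm:intro} (recorded in the body as Corollary~\ref{c:trans}, $K_\frakm \simeq F^{-1}_{\Cl}\circ \frF^\sh\circ F_{\Ch}$, with the mutation equivalence $\frF^\sh$ implicit in the introduction's formulation), and one reads off $K_\frakm(Y,m)=(Y,m^{-1}\circ\frakm)$ from the definition. Your extra remarks about the essential images coinciding and the quasi-inverse being defined only up to natural isomorphism are sound but add nothing beyond what the diagram already supplies.
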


\begin{example}\label{ex: intro}
Recall 
the perverse schober
\beq
\xymatrix{
\cP_B=(i_*:\Perf_\proper(P_{n-2})\ar[r] &  \Perf_\proper(\BT^\vee_0))
}
\eeq
 on the disk $D$ with a single critical point $0\in D$.
 
 The clean objects $\Perf_\proper(\BT^\vee_0)_\invar \subset  \Perf_\proper(\BT^\vee_0)$ in the nearby category are those supported away from $P_{n-2}$. Thus  their inertia category has a simple description
 \beq
 \cL\Perf_\proper(\BT^\vee_0)_\invar\simeq \Perf_\proper((\BT^\vee_0 \setminus P_{n-2}) \times \Gm)
 \eeq
 
 The monodromy  
 on the nearby category $ \Perf_\proper(\BT^\vee_0)$ is given by tensoring with $\cO(1)$.
  To smoothly extend $\cP_B$ to a perverse schober $\tilde \cP_B$ on the sphere $S^2$, we must choose a trivialization $\tau:\cO\risom \cO(1)$.
 There are $n$ natural choices given by the homogenous coordinates $x_1, \ldots, x_{n}$. 
 
 Suppose we choose say $x_n$, and write $y_1= x_1/x_n, \ldots, y_{n-1}= x_{n-1}/x_n$. Then the 
 wall-crossing formula  \eqref{eq:intro abstract wc} yields the  birational transformation 
 \begin{align}
y_i & \mapsto   y_i, \hspace{1em} i=1, \ldots, n-1\label{eq:wcf again}\\
y_n & \mapsto  y_n^{-1}(1 + y_1 + \cdots + y_{n-1})\nonumber
\end{align}
originally appearing in \eqref{eq:mut}.

\end{example}
%

\subsubsection{Back to geometry}\label{s:back to geom}
By the mirror equivalence of Theorem~\ref{thm:lgps}, the conclusion of Example~\ref{ex: intro} applies equally well to
the perverse schober
\beq
\xymatrix{
\cP_A=(j^*:\mu\Sh_{L}(\BC^n)\ar[r] &  \mu\Sh_{L^\circ}(\BC^n))
}
\eeq

We explain here the  direct geometric interpretation of the wall-crossing formula \eqref{eq:wcf again}.
We will resume with the constructions and notation introduced in Section~\ref{s:local geom} above. See in particular Figure~\ref{f:skel}.
 
Inside of $\Cyl = \BC \setminus\{\epsilon\} \simeq \BC^\times$,
  recall the Chekanov and Clifford graphs
\beq
\xymatrix{
\Gamma_\Ch = \gamma_{\Ch} \cup I_\Ch
&
\Gamma_\Cl = \gamma_{\large} \cup I_\Cl}
\eeq
and inside of the symplectic manifold  $M =\{W\not = \epsilon\} \simeq (\BC^\times)^n$,
the  corresponding  Chekanov and Clifford skeleta 
\beq
\xymatrix{
L_\Ch =  \{  W \in \Gamma_\Ch, |z_1| = \cdots = |z_n|\}  
&
L_\Cl =  \{  W \in \Gamma_\Cl, |z_1| = \cdots = |z_n|\}  
}
\eeq
From Theorem~\ref{thm:intro}, one can show there are natural equivalences for microlocal sheaves 
\beq
\xymatrix{
 \mu\Sh_{L_\Ch}(M)  \simeq  \Gamma(\Cyl, \cP_A) \simeq  \mu\Sh_{L_\Cl}(M)
}
\eeq

Note we have closed embeddings $T_\Ch \subset L_\Ch$, $T_\Cl \subset L_\Cl$, and hence fully faithful functors
\beq
\xymatrix{
\Loc(T_\Ch) \simeq   \mu\Sh_{T_\Ch}(M) \ar@{^(->}[r] &  \mu\Sh_{L_\Ch}(M)
}
\eeq
\beq
\xymatrix{
\Loc(T_\Cl) \simeq   \mu\Sh_{T_\Cl}(M) \ar@{^(->}[r] &  \mu\Sh_{L_\Cl}(M)
}
\eeq
where we write $\Loc$ for the dg category of local systems.

Since $\gamma_\Ch$ does not wind around $0\in \BC$, we have a canonical splitting
$T^\vanish_\Ch \times \gamma_\Ch\risom T_\Ch.
$
The choice of a coordinate, say  $z_n$, from among $z_1, \ldots, z_n$ provides a parallel splitting
\beq
\xymatrix{
T^\vanish_\Cl \times \gamma_\Cl \ar[r]^-\sim &  T_\Cl 
& 
((z_1, \ldots, z_n), e^{i\theta})  \ar@{|->}[r] & (z_1, \ldots, e^{i\theta} z_k, \ldots, z_n)
}
\eeq
The splittings in turn provide identifications for inertia stacks
\beq
\xymatrix{
 \cL\Loc(T^\vanish_\Ch) \simeq \Loc(T^\vanish_\Ch\times S^1) \simeq 
\Loc(T_\Ch)
}
\eeq
\beq
\xymatrix{
 \cL\Loc(T^\vanish_\Cl) \simeq \Loc(T^\vanish_\Cl\times S^1) \simeq 
\Loc(T_\Cl)
}
\eeq

More fundamentally, the choice of the coordinate $z_n$ provides an extension 
\beq
\xymatrix{
\ol W:\BC^{n-1}\times\BC\BP^1 \ar[r] & \BC\BP^1 & 
 \ol W = [W, z]
}
\eeq 
where we equip $\BC^{n-1}$ with coordinates $z_1, \ldots, z_{n-1}$, and $\BC\BP^1$ with coordinates $[z_n, z]$.
This gives a smooth extension of the perverse schober $\cP_A$ to the sphere $S^2 \simeq \BC\BP^1$.
 
 The circles $\gamma_\Ch, \gamma_\Cl \subset \BC$, with their counterclockwise orientations, are naturally isotopic when regarded within
$\BC\BP^1 \setminus \{0\}\simeq\BC$, but with opposite orientations. 
We also obtain an identification $T^\vanish_\Ch \simeq T^\vanish_\Cl$ by parallel transporting each within $\BC^{n-1}\times\BC\BP^1$  above the respective intervals $[\epsilon - \rho_\Ch, \oo]$, $[-\oo, \epsilon - \rho_\Cl]$
 to the fiber at $\oo$. 

Thus altogether, we have a canonical identification $T_\Ch \simeq T_\Cl$ compatible with the above splittings, but notably with the inverse map on the factor $S^1$.  
Now with the above identifications in hand, we have the following geometric interpretation of Theorem~\ref{thm:intro}.

\begin{thm} For the perverse schober $\cP_A$, the Chekanov and Clifford functors of Theorem~\ref{thm:intro} factor  into the compositions
 \beq
\xymatrix{
F_\Ch:\cL \cD_\invar \ar@{^(->}[r] & \cL\Loc(T^\vanish_\Ch) \simeq
\Loc(T_\Ch) \ar@{^(->}[r] &  \mu\Sh_{L_\Ch}(M)\simeq \Gamma(\Cyl, \cP_A|_{\Cyl})
}
\eeq
 \beq
\xymatrix{
F_\Cl:\cL \cD_\invar \ar@{^(->}[r] & \cL\Loc(T^\vanish_\Cl) \simeq
\Loc(T_\Cl) \ar@{^(->}[r] &  \mu\Sh_{L_\Cl}(M)\simeq \Gamma(\Cyl, \cP_A|_{\Cyl})
}
\eeq
\end{thm}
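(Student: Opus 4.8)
The plan is to identify each of the three equivalences
$\mu\Sh_{L_\Ch}(M) \simeq \Gamma(\Cyl, \cP_A|_\Cyl) \simeq \mu\Sh_{L_\Cl}(M)$
with the global-sections descriptions available from the general theory of perverse schobers on the cylinder, and then to trace the Chekanov and Clifford embeddings of Theorem~\ref{thm:intro} through these identifications. First I would recall that $\cP_A = \cP_{S^2,0}|_\Cyl$ for the smooth extension furnished by $\ol W = [W, z]$; the cylinder $\Cyl = \BC\BP^1 \setminus \{0, \oo\}$ contains the single marked point $\epsilon$ where $\cP_A$ has its critical structure (the critical value $0 \in \BC$ sits over $0 \in \BC\BP^1$, which is \emph{not} in $\Cyl$ --- wait, rather: $\cP_A$ is supported with critical point $0$, and we restrict to $\Cyl = \BC \setminus \{\epsilon\}$; the point is that on $\Cyl$ the schober still carries its vanishing data at $0$, while $\epsilon$ and $\oo$ are regular punctures). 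Global sections $\Gamma(\Cyl, \cP_A|_\Cyl)$ are then computed by a homotopy colimit over a cell decomposition of $\Cyl$ adapted to either the Chekanov graph $\Gamma_\Ch = \gamma_\Ch \cup I_\Ch$ or the Clifford graph $\Gamma_\Cl = \gamma_\Cl \cup I_\Cl$: each such graph is a spine of $\BC \setminus \{\epsilon\}$ through $0$, so the microlocal sheaf categories $\mu\Sh_{L_\Ch}(M)$, $\mu\Sh_{L_\Cl}(M)$ are the corresponding models of $\Gamma(\Cyl, \cP_A|_\Cyl)$. This is where I invoke Theorem~\ref{thm:lgps} and the sheaf-theoretic machinery of \cite{Nlgps} essentially as a black box.

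Next I would analyze the two factorizations separately. For $F_\Ch$: since $\gamma_\Ch$ does not wind around $0$, the microlocal sheaves supported on $L_\Ch$ that are \emph{clean} in the nearby-category sense are exactly those whose microlocalization vanishes along the thimble $D_\Ch$, i.e.\ those coming from $\Loc(T_\Ch)$ under the closed embedding $T_\Ch \subset L_\Ch$; and under the canonical splitting $T^\vanish_\Ch \times \gamma_\Ch \risom T_\Ch$ this is $\cL\Loc(T^\vanish_\Ch)$. Unwinding the construction of $F_\Ch$ in Theorem~\ref{thm:intro}, an object $(Y,m) \in \cL\cD_\invar$ maps to the local system on $T^\vanish_\Ch \times S^1$ with fiber $Y$ and monodromy $m$ around the $S^1 = \gamma_\Ch$ factor --- this is the content of the top row. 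For $F_\Cl$: the Clifford circle $\gamma_\Cl$ \emph{does} wind around $0$, so the splitting $T^\vanish_\Cl \times \gamma_\Cl \risom T_\Cl$ requires the choice of coordinate $z_n$ (as displayed), and parallel transport of the vanishing torus around $\gamma_\Cl$ picks up precisely the nearby monodromy $T_{\Psi,r}$, whose trivialization via $\tau = x_n$ is what converts abstract monodromy data into an honest local system. The upshot is that $F_\Cl(Y,m)$ is the local system on $T^\vanish_\Cl \times S^1$ with fiber $Y$ and monodromy $m \circ \frakm^{-1}$ (or its inverse, depending on orientation bookkeeping), where $\frakm = \tau^{-1} \circ \canfromid_\Psi$ is exactly the automorphism of the identity appearing in Theorem~\ref{thm:intro}.

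The final step is to match the two pictures inside $\Gamma(\Cyl, \cP_A|_\Cyl)$. The circles $\gamma_\Ch$ and $\gamma_\Cl$, counterclockwise in $\BC$, become isotopic in $\BC\BP^1 \setminus \{0\} \simeq \BC$ but with \emph{opposite} orientations (as noted after Remark~\ref{rem:crit}), and parallel transport within $\BC^{n-1} \times \BC\BP^1$ over the complementary intervals $[\epsilon - \rho_\Ch, \oo]$ and $[-\oo, \epsilon - \rho_\Cl]$ to the fiber at $\oo$ supplies the identification $T^\vanish_\Ch \simeq T^\vanish_\Cl$ and hence $T_\Ch \simeq T_\Cl$ with the inverse map on the $S^1$ factor. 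Composing, $F_\Cl^{-1} \circ F_\Ch$ sends $(Y,m) \mapsto (Y, m^{-1} \circ \frakm)$, which is precisely $K_\frakm$; this is the commutativity of the diagram. I expect the \textbf{main obstacle} to be the orientation/monodromy bookkeeping in the $F_\Cl$ factorization: one must verify with care that parallel-transporting the vanishing torus around the Clifford circle contributes the nearby monodromy $T_{\Psi,r}$ (and not its inverse, or a shifted variant), that the trivialization $\tau$ enters with the correct sign so that $\frakm = \tau^{-1} \circ \canfromid_\Psi$ and not $\canfromid_\Psi^{-1} \circ \tau$ or similar, and that the ``inverse map on $S^1$'' coming from the opposite-orientation isotopy is consistent with the $K_\frakm(Y,m) = (Y, m^{-1}\circ\frakm)$ appearing in Theorem~\ref{thm:intro}. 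Everything else --- the identification of clean microlocal sheaves with local systems on the vanishing tori, and the global-sections computation via the skeleta --- is structural and follows from \cite{Nlgps} together with Theorem~\ref{thm:intro} applied to $\cP_A$.
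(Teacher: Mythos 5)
Your proposal is correct and follows essentially the same route as the paper: the paper establishes the equivalences $\mu\Sh_{L_\Ch}(M)\simeq\Gamma(\Cyl,\cP_A)$, $\mu\Sh_{L_\Cl}(M)\simeq\Gamma^\sh(\Cyl,\cP_A)$ and their restrictions $\Loc(T_\Ch)\simeq\Gamma(\Cyl,\cP_{A,\sm})$, $\Loc(T_\Cl)\simeq\Gamma^\sh(\Cyl,\cP_{A,\sm})$ in the preceding Proposition, fixes the splittings of $T_\Ch$, $T_\Cl$ and the framing from $z_n$, and then declares the factorization to follow by tracing through the definitions, which is exactly what you do. Your flagged concern about the orientation and monodromy bookkeeping for $F_\Cl$ is the right one, but it is resolved by the same ingredients the paper supplies (the opposite-orientation isotopy of $\gamma_\Ch$, $\gamma_\Cl$ in $\BC\BP^1\setminus\{0\}$ and the intertwining of Proposition~\ref{p:intertwine}).
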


\begin{corollary}
The wall-crossing formula of Corollary~\ref{cor:wc} is the birational map on moduli of objects for the partially defined functor
$
 \Loc(T_\Ch)\to  \Loc(T_\Cl)
$
 given by 
comparing clean local systems on the Chekanov and Clifford tori as objects in  $\Gamma(\Cyl, \cP_A)$
with coordinates related by the given extension of $\cP_A$ to the sphere $S^2\simeq \BC\BP^1$.
\end{corollary}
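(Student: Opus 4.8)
The plan is to deduce this Corollary by assembling three things already established: the factorization of $F_\Ch$ and $F_\Cl$ through $\Loc(T_\Ch)$ and $\Loc(T_\Cl)$ given by the preceding Theorem, the abstract wall-crossing formula of Corollary~\ref{cor:wc}, and the coordinate computation of Example~\ref{ex: intro} transported along the mirror equivalence of Theorem~\ref{thm:lgps}. First I would use the preceding Theorem to identify the comparison functor of the statement. It writes $F_\Ch$ (resp.\ $F_\Cl$) as the composite of the embedding $\cL\cD_\invar\hra\cL\Loc(T^\vanish_\Ch)\simeq\Loc(T_\Ch)$ with the microlocalization equivalence $\Loc(T_\Ch)\simeq\mu\Sh_{T_\Ch}(M)\hra\mu\Sh_{L_\Ch}(M)\simeq\Gamma(\Cyl,\cP_A)$, and similarly for the Clifford torus. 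Hence the image of $F_\Ch$ in $\Gamma(\Cyl,\cP_A)$ is exactly the subcategory of clean local systems on $T_\Ch$, the image of $F_\Cl$ the clean local systems on $T_\Cl$, and $F_\Cl^{-1}\circ F_\Ch$ is precisely the partially defined functor $\Loc(T_\Ch)\to\Loc(T_\Cl)$ sending a clean local system on the Chekanov torus to the unique clean local system on the Clifford torus representing the same object of $\Gamma(\Cyl,\cP_A)$. By Corollary~\ref{cor:wc} its effect on moduli of objects is the transformation $(Y,m)\mapsto(Y,m^{-1}\circ\frakm)$.

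Next I would unwind this transformation through the geometric splittings recorded above. Under $T^\vanish_\Ch\times\gamma_\Ch\risom T_\Ch$ a clean pair $(Y,m)\in\cL\cD_\invar$ corresponds to the rank one local system on $T_\Ch$ with holonomy $Y$ along the vanishing directions and holonomy $m$ around $\gamma_\Ch$. The coordinate $z_n$ simultaneously determines the parallel splitting $T^\vanish_\Cl\times\gamma_\Cl\risom T_\Cl$, the identification $T^\vanish_\Ch\simeq T^\vanish_\Cl$ obtained by parallel transport to the fibre at $\oo$ over the two complementary intervals, and --- via the extension $\ol W=[W,z]$ --- the trivialization $\tau$ needed to extend $\cP_A$ smoothly to $S^2\simeq\BC\BP^1$, hence the automorphism $\frakm=\tau^{-1}\circ\canfromid_\Psi$ of Theorem~\ref{thm:intro}. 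Under the resulting identification $T_\Ch\simeq T_\Cl$, which agrees with the splittings up to inversion of the $S^1$ factor, the image $(Y,m^{-1}\circ\frakm)$ is the clean local system on $T_\Cl$ with holonomy $Y$ along the identified vanishing directions and holonomy $m^{-1}\circ\frakm$ around $\gamma_\Cl$: the inverse comes from the orientation reversal of the circle factor and the twist by $\frakm$ from the trivialization attached to $z_n$. So the partially defined functor of the statement is literally $K_\frakm$ read in these geometric terms.

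Finally I would invoke the mirror equivalence of Theorem~\ref{thm:lgps} with this same coordinate choice. It identifies $\cD_\invar$ with the clean part of $\Perf_\proper(\BT^\vee_0)$, whose inertia category is $\Perf_\proper((\BT^\vee_0\setminus P_{n-2})\times\Gm)$ by Example~\ref{ex: intro}; thus the comparison functor is defined exactly on the open locus of clean local systems, which is why it is only a birational map. Equipping the first factor with coordinates $y_1,\ldots,y_{n-1}$ and the $\Gm$ factor with a coordinate $y_n$ recording the monodromy eigenvalue $m$, and taking the trivialization given by $x_n$, Example~\ref{ex: intro} already computes $(Y,m)\mapsto(Y,m^{-1}\circ\frakm)$ to be the birational map $y_i\mapsto y_i$ for $i=1,\ldots,n-1$ and $y_n\mapsto y_n^{-1}(1+y_1+\cdots+y_{n-1})$, i.e.\ the map \eqref{eq:mut}. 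Combined with the identifications of the previous two paragraphs, this is exactly the assertion of the Corollary.

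The step I expect to be the main obstacle is the second one: matching the abstract inertia twist $K_\frakm$ with the geometric comparison of Chekanov and Clifford local systems. Concretely, one must verify that the single coordinate $z_n$ is responsible, in a mutually compatible way, for the splitting of $T_\Cl$, for the identification $T^\vanish_\Ch\simeq T^\vanish_\Cl$, and for the trivialization $\tau$ producing $\frakm$; and that the orientation reversal of $\gamma_\Ch$ versus $\gamma_\Cl$ inside $\BC\BP^1\setminus\{0\}$ is precisely what turns the monodromy $m$ into $m^{-1}$ before the correction by $\frakm$. Once this bookkeeping is pinned down, the remainder is the formal composition of statements already proved.
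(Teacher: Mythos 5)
Your proposal is correct and follows essentially the same route as the paper, which states this corollary as an immediate consequence of the preceding factorization theorem together with Corollary~\ref{cor:wc} and the coordinate computation of Example~\ref{ex: intro} transported along the mirror equivalence. The bookkeeping you flag as the main obstacle (the compatibility of the several roles played by $z_n$ and the orientation reversal of the circle factor) is exactly what the paper compresses into ``one can trace through the definitions,'' so your more explicit unwinding matches the intended argument.
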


\subsection{Acknowledgements}
I thank Vivek Shende for asking whether one could deduce the wall-crossing formula for toric mutations from the structure of the Landau-Ginzburg $A$-model of
 $\BC^n$ with superpotential $W =z_1\cdots z_n$. I thank AIM for hosting the recent workshop~\cite{AIM} at which Vivek  first posed this question. I also thank Dmitry Tonkonog for explaining various aspects of his work.
 
 I am  grateful for the support of NSF grant DMS-1502178.


\section{Perverse schobers}\label{s: ps}
We spell out  constructions with perverse schobers on some simple but useful  complex curves.  


\subsection{On a disk}

Following Kapranov-Schechtman~\cite{KapS}, a perverse schober on the disk $D = \{|z| <\epsilon\} \subset \BC\BP^1$ with a single critical point at $0\in D$,
in  its minimal one-cut realization, is
a {\em spherical functor}
\beq
\xymatrix{
\cP_{D, 0}=(S:\cD_\Phi \to  \cD_\Psi)
}
\eeq
between stable dg categories. By definition, a functor $S$ is spherical if it admits both a left adjoint $S^\ell$ and right adjoint $S^r$, so that we have an adjoint triple $(S^\ell, S, S^r)$
with units and counits of adjunctions denoted by
\beq
\xymatrix{
u_r:\id_\Phi\ar[r] & S^r S & c_r: S S^r \ar[r] & \id_\Psi
}
\eeq
\beq
\xymatrix{
u_\ell:\id_\Psi\ar[r] & S S^\ell & c_\ell: S^\ell S \ar[r] & \id_\Phi 
}
\eeq
Moreover, if we form the natural triangles
\beq
\xymatrix{
T_{\Phi, r} := \Cone(u_r)[-1] \ar[r]^-{\cantoid_\Phi} &  \id_\Phi \ar[r]^-{u_r} & S^r S 
}
\eeq
\beq
\xymatrix{
SS^r \ar[r]^-{c_r} & \id_\Psi \ar[r]^-{\canfromid_\Psi} &  \Cone(u_r) =: T_{\Psi, r} 
}
\eeq
\beq
\xymatrix{
T_{\Psi, \ell} := \Cone(u_\ell)[-1] \ar[r]^-{\cantoid_\Psi} &  \id_\Psi \ar[r]^-{u_\ell} & S S^\ell 
}
\eeq
\beq
\xymatrix{
S^\ell S \ar[r]^-{c_\ell} & \id_\Phi \ar[r]^-{\canfromid_\Phi} &  \Cone(u_\ell) =: T_{\Phi, \ell} 
}
\eeq
then the following properties are required to hold (and in fact, by a theorem of Anno-Logvinenko~\cite{AL}, any two imply  all four):

\begin{enumerate}

\item[(SF1)] $T_{\Psi, r}$ is an equivalence.

\item[(SF2)] The natural composition
\beq
\xymatrix{
S^r \ar[r] & S^r S S^\ell \ar[r] & T_{\Phi, r} S^\ell[1]
}
\eeq
is an equivalence.

 \item[(SF3)] $T_{\Phi, r}$ is an equivalence.

\item[(SF4)] The natural composition
\beq
\xymatrix{
S^\ell  T_{\Psi, r}[-1]  \ar[r] & S^\ell S S^r \ar[r] & S^r
}
\eeq
is an equivalence.
\end{enumerate}

When the above properties hold, 
 $T_{\Phi, \ell}$, $T_{\Psi, \ell}$ are 
 respective inverses of 
  $T_{\Phi, r}$, $T_{\Psi, r}$, and we refer to them as {\em monodromy functors}.

\begin{example}[Smooth hypersurfaces]\label{ex hyper}
Let $X$ be a smooth variety. Let $\cL_X \to  X$ be a line bundle and $\sigma:X\to \cL_X$ a section transverse to the zero section. Let $Y = \{\sigma = 0\}$ be the resulting smooth hypersurface and $i:Y \to X$ its  inclusion.

Let $\Coh(Y)$, $\Coh(X)$  denote the respective dg categories of coherent sheaves.
We will regard the line bundle $\cL_X$ as an object of $\Coh(X)$,
and its restriction $\cL_Y = i^*\cL_X$ as an object of $\Coh(Y)$.
We will regard the section $\sigma$ as a morphism $\sigma:\cO_X\to \cL_X$,
which by duality gives a morphism $\sigma^\vee:\cL_X^\vee\to \cO_X$.

We have the adjoint triple $(i^*, i_*, i^!)$ which satisfies functorial identities
\beq
\xymatrix{
i^*(-) \simeq \cO_Y \otimes_{\cO_X} (-)  
&
i^!(-) \simeq  \cL_Y[-1]\otimes_{\cO_X} (-) 
}
\eeq

Set $\cD_\Phi = \Coh(Y)$,  $\cD_\Psi= \Coh(X)$ and $S= i_*$. Then the monodromy functors satisfy
\beq
\xymatrix{
T_{\Psi, r}(-)  \simeq  \cL_X \otimes_{\cO_X} (-)  &  
T_{\Phi, r}(-)  \simeq  \cL_Y[-2] \otimes_{\cO_Y}(-)
}
\eeq
and hence  both are equivalences. Thus  (SF1) and  (SF3) hold, and so $S=i_*$ is a spherical functor.
We will denote this
perverse schober by 
\beq
\cP_{Y \subset X} = (i_*:\Coh(Y) \to \Coh(X))
\eeq
\end{example}

 
 \subsection{On a sphere}

Suppose given a perverse schober on the disk  $D = \{|z| <\epsilon\} \subset \BC\BP^1$  with a single critical point $0\in D$ in its realization as a spherical functor
\beq
\xymatrix{
\cP_{D, 0}=(S:\cD_\Phi \to  \cD_\Psi)
}
\eeq
Recall we then have  inverse monodromy functors $T_{\Psi, r}$, $T_{\Psi, \ell}$ on the nearby category $\cD_\Psi$.

Let us formulate what it means to extend $\cP_{D, 0}$ smoothly to the sphere $S^2 \simeq \BC\BP^1$.

\begin{defn} (1) A {\em framing} for the perverse schober
$\cP_{D, 0}=(S:\cD_\Phi \to  \cD_\Psi)$ 
is a trivialization of the monodromy of the nearby category
\beq\label{eq:framing}
\xymatrix{
\tau:\id \ar[r]^-\sim &  T_{\Psi, r}
}
\eeq

(2) 
A perverse schober on the  sphere $S^2 \simeq \BC\BP^1$
with a single critical point $0\in S^2$ is  a pair 
\beq
\xymatrix{
\cP_{S^2, 0}=(S:\cD_\Phi \to  \cD_\Psi, \tau:\id \risom T_{\Psi, r})
}
\eeq
consisting of a spherical functor $S$
and  a framing $\tau$.
\end{defn}

\begin{example}[Smooth hypersurfaces]\label{ex hyper frame}
Recall from Example~\ref{ex hyper} the perverse schober
\beq
\cP_{Y \subset X} = (i_*:\Coh(Y) \to \Coh(X))
\eeq
Recall the monodromy of the nearby category is given by
\beq
\xymatrix{
T_{\Psi, r}(-)  \simeq  \cL_X \otimes_{\cO_X} (-) 
}
\eeq
Thus framings are equivalent to  isomorphisms
\beq
\xymatrix{
\tau:\cO_X \ar[r]^-\sim &  \cL_X
}
\eeq
or in other words, non-vanishing sections of $ \cL_X$.
\end{example}

 
 \subsection{On a cylinder}
 Suppose given perverse schober on the disk  $D = \{|z| <\epsilon\} \subset \BC\BP^1$  with a single critical point $0\in D$ in its realization as a spherical functor
 \beq
\xymatrix{
\cP_{D, 0}=(S:\cD_\Phi \to  \cD_\Psi)
}
\eeq

 Let us formulate what it means to extend $\cP_{D, 0}$ smoothly to the cylinder $\Cyl = \BC \setminus\{\epsilon\} \simeq \BC\BP^1 \setminus \{\epsilon, \oo\}$.

\begin{defn}\label{def:pd cyl}
(1) A perverse schober on the  cylinder $\Cyl = \BC \setminus\{\epsilon\} \simeq \BC\BP^1 \setminus \{\epsilon, \oo\}$
with a single critical point $0\in \Cyl$  is a pair 
\beq
\xymatrix{
\cP_{\Cyl, 0}=(S:\cD_\Phi \to  \cD_\Psi, M:\cD_\Psi\risom  \cD_\Psi)
}
\eeq
consisting of a spherical functor $S$ and an additional monodromy  functor 
$M$.

(2) The {\em global sections} $\Gamma(\Cyl, \cP_{\Cyl, 0})$ is the dg category of quintuples $(X,Y_1, Y_2, \Delta, m)$
consisting of $X\in \cD_\Phi$, $Y_1, Y_2\in \cD_\Psi$, an exact triangle 
\beq
\xymatrix{
\Delta =(Y_1[-1] \ar[r]^-i & S(X)\ar[r]^-p & Y_2\ar[r]^-{\partial} & Y_1)
}
\eeq
and an isomorphism 
\beq
\xymatrix{
m:Y_1 \ar[r]^-\sim &  M(Y_2)
}
\eeq
Morphisms are maps of diagrams: a
morphism $(X, Y_1, Y_2, \Delta, m) \to (X', Y'_1, Y'_2, \Delta', m')$ consists of maps $f:X \to X'$, $g_1:Y_1\to Y_1'$, $g_2:Y_2\to Y_2'$, with a lift to a map of triangles 
\beq
\xymatrix{
\ar@<-1.5em>[d]_-g\Delta =  (\ar@<1.25em>[d]^-{g_1[-1]}Y_1[-1]\ar[r]^-i & S(X) \ar[d]^-{S(f)} \ar[r]^-p & 
Y_2\ar[d]^-{g_2} \ar[r]^-{\partial} & Y_1\ar[d]^-{g_1} )\\
\Delta' =  (Y_1'[-1]\ar[r]^-i & S(X')\ar[r]^-p & Y_2'\ar[r]^-{\partial} & Y_1' )
}\eeq
and a commutativity isomorphism $M(g_2) \circ m \simeq m' \circ g_1$.
\end{defn}

\begin{remark}
It is always possible to extend a  perverse schober $\cP_{D, 0}$ on the disk $D$ with a single critical point $0\in D$ to one
on the  cylinder $\Cyl$ by  taking $M$ to be the identity functor. 
\end{remark}

\begin{remark}
It is always possible to restrict a perverse schober $\cP_{S^2, 0}$ on the sphere $S^2$ with a single critical point 
$0\in S^2$ to one
on the  cylinder $\Cyl$ by forgetting the framing $\tau$ and taking $M$ to be the identity functor.
\end{remark}

\begin{remark}
The notion of global sections  $\Gamma(\Cyl, \cP_{\Cyl, 0})$ given in Definition~\ref{def:pd cyl}(2) follows from considering sections supported over the specific Lagrangian skeleton $\Gamma_\Ch = \gamma_\Ch \cup I_\Ch \subset \Cyl$. We will see immediately below an equivalent notion  of global sections that follows from considering
sections supported over the alternative Lagrangian skeleton $\Gamma_\Cl = \gamma_\Cl \cup I_\Cl \subset \Cyl$.
 \end{remark}
%
%

\subsubsection{Mutated global sections} 
The notion of  global sections $\Gamma(\Cyl, \cP_{\Cyl, 0})$ given above in Definition~\ref{def:pd cyl}(2) is one of many equivalent possibilities. We introduce here a key alternative notion and show it is equivalent to the original.

\begin{defn}\label{def:mut gs}
The {\em mutated global sections} $\Gamma^\sh(\Cyl, \cP_{\Cyl, 0})$ is the dg category of quintuples $(X^\sh,Y^\sh_1, Y^\sh_2, \Delta^\sh, m^\sh)$
consisting of $X^\sh\in \cD_\Phi$, $Y^\sh_1, Y^\sh_2\in \cD_\Psi$, an exact triangle 
\beq\label{eq: given tri}
\xymatrix{
\Delta^\sh =(T_{\Psi, r}(Y^\sh_1)[-1] \ar[r]^-i & S(X^\sh)\ar[r]^-p & Y^\sh_2\ar[r]^-{\partial} & T_{\Psi, r}(Y^\sh_1))
}
\eeq
and an isomorphism 
\beq
\xymatrix{
m^\sh: M(Y^\sh_1) \ar[r]^-\sim &  Y^\sh_2
}
\eeq
Morphisms are maps of diagrams as in Definition~\ref{def:pd cyl}(2).
\end{defn}

Now we will define a mutation equivalence
\begin{equation*}
\xymatrix{
\frF^\sh:\Gamma(\Cyl, \cP_{\Cyl, 0}) \ar[r] & \Gamma^\sh(\Cyl, \cP_{\Cyl, 0}) &
\frF^\sh(X, Y_1, Y_2, \Delta, m) = (X^\sh, Y^\sh_1, Y^\sh_2, \Delta^\sh, m^\sh) 
}
\end{equation*}

First, set $Y_1^\sh := Y_2$, $Y_2^\sh := Y_1$, and
\beq
\xymatrix{
m^\sh:M(Y^\sh_1)  =  M(Y_2) \ar[r]^-{m^{-1}}  & Y_1 = Y_2^\sh
}
\eeq

By adjunction, the map $p:S(X)\to Y_2$ provides a map $\tilde p: X\to S^r(Y_2)$, and we set $X^\sh := \Cone(\tilde p)$.
To construct the triangle $\Delta^\sh$, 
note that adjunction  further provides a commutative diagram
\beq\label{eq:commadj}
\xymatrix{
\ar[dr]_-{p} S(X)\ar[r]^-{S(\tilde p)} &  S(S^r(Y_2)) \ar[d]^-{c_r} \\
&  Y_2
}
\eeq
Recall as well the natural triangle
\beq
\xymatrix{
 SS^r \ar[r]^-{c_r} & \id  \ar[r]^-{\canfromid_\Psi} & T_{\Psi, r} \ar[r] & SS^r[1]
}
\eeq
  Taking the cone  of each map  of \eqref{eq:commadj}, we obtain another  triangle
  \beq
\xymatrix{
 \Cone(S(\tilde p)) \ar[r] & Y_1 \ar[r]^-{\tilde \canfromid_\Psi} & T_{\Psi, r}(Y_2) \ar[r] & \Cone(S(\tilde p))[1]
}
\eeq
Now we take $\Delta^\sh$ to be the rotated triangle
\beq\label{eq:key rot tri}
\xymatrix{
\Delta^\sh = (T_{\Psi, r}(Y_2)[-1] \ar[r] & \Cone(S(\tilde p)) \ar[r] & Y_1 \ar[r]^-{\tilde \canfromid_\Psi} & T_{\Psi, r}(Y_2))
}
\eeq
using the canonical identification $\Cone(S(\tilde p)) \simeq  S(\Cone(\tilde p))$.

\begin{remark}\label{key rem}
Let us highlight a key property of the map $\tilde \canfromid_\Psi:Y_1 \to T_{\Psi, r}(Y_2)$ appearing in the triangle $\Delta^\sh$. By construction, it arises in a commutative diagram
\beq\label{eq:commadj}
\xymatrix{
\ar[dr]_-{\canfromid_\Psi} Y_2 \ar[r]^-{\partial} &  Y_1 \ar[d]^-{\tilde \canfromid_\Psi}\\
&  T_{\Psi, r}(Y_2)
}
\eeq
where $\partial$ is given in the initial triangle~\eqref{eq: given tri}, and $\canfromid_\Psi$ is the canonical map.
In particular, if we have $Y_1 = Y_2$, and the map $\partial:Y_2\to Y_1$ is the identity, then $\tilde \canfromid_\Psi =  \canfromid_\Psi$.
\end{remark}
%
%

\begin{lemma}
The mutation functor
\begin{equation*}
\xymatrix{
\frF:\Gamma(\Cyl, \cP_\Cyl) \ar[r] & \Gamma^\sh(\Cyl, \cP_\Cyl) &
\frF(X, Y_1, Y_2, \Delta, m) = (X^\sh, Y^\sh_1, Y^\sh_2, \Delta^\sh, m^\sh) 
}
\end{equation*}
 is an equivalence.
 \end{lemma}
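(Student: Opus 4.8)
The plan is to construct an explicit inverse functor $\frF^{-1}:\Gamma^\sh(\Cyl,\cP_\Cyl)\to \Gamma(\Cyl,\cP_\Cyl)$ by running the same construction ``in reverse,'' using the left adjoint $S^\ell$ in place of $S^r$ and the dual canonical map $\cantoid_\Psi$. Concretely, given $(X^\sh, Y^\sh_1, Y^\sh_2, \Delta^\sh, m^\sh)$, I would set $Y_1 := Y^\sh_2$, $Y_2 := Y^\sh_1$, take $m := (m^\sh)^{-1}$ suitably transported through $M$, and from the map $i:T_{\Psi,r}(Y^\sh_1)[-1]\to S(X^\sh)$ — equivalently, after applying the monodromy trivializations and adjunction, a map $S^\ell(Y^\sh_1)\to X^\sh$ — form $X := \Cone(\cdot)[-1]$ (or the appropriate cone/shift dictated by property (SF4), which identifies $S^\ell T_{\Psi,r}[-1]$ with $S^r$). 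One then builds $\Delta$ by taking cones in the analogue of diagram \eqref{eq:commadj} for the left adjoint, using the natural triangle $SS^\ell \to \id_\Psi \to T_{\Psi,\ell}\to SS^\ell[1]$ rather than the one for $S^r$. This is the ``mutation in the other direction,'' and the symmetry of the spherical-functor axioms (all four of (SF1)–(SF4) hold) is exactly what makes it well-defined.

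The key steps, in order: (i) spell out $\frF^{-1}$ as above, being careful about which shifts and cones appear so that $\Delta$ is a genuine exact triangle of the form in Definition~\ref{def:pd cyl}(2); (ii) check functoriality of $\frF$ and $\frF^{-1}$ on morphisms — this is routine since both are built from cones of maps of triangles, and cones are functorial up to coherent homotopy in the stable $\oo$-categorical setting, so the maps of diagrams go through; (iii) verify $\frF^{-1}\circ \frF \simeq \id$ by tracing the two cone constructions and using that $\Cone(\tilde p)$ together with the commuting triangle \eqref{eq:commadj} recovers the original datum $(X, Y_1, Y_2, \Delta, m)$ — here the octahedral axiom is used to identify the iterated cone with $X$ again, and properties (SF2)/(SF4) are what guarantee the adjunction round-trip $X \to S^r(Y_2)$, cone, apply $S^\ell$, cone back, returns $X$; (iv) verify $\frF\circ\frF^{-1}\simeq \id$ symmetrically. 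Alternatively — and this may be cleaner to write — one can avoid constructing $\frF^{-1}$ by hand and instead show directly that $\frF$ is fully faithful and essentially surjective: essential surjectivity amounts to solving for $(X,Y_1,Y_2,\Delta,m)$ given the $\sh$-data, which is the same reverse construction; fully faithfulness reduces to a computation of mapping complexes, where the cone defining $X^\sh$ induces a long exact sequence that, combined with adjunction $(S^\ell, S, S^r)$, matches $\Hom_{\Gamma}$ with $\Hom_{\Gamma^\sh}$ term by term.

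I expect the main obstacle to be step (iii): bookkeeping the shifts and the various connecting maps so that the octahedral/rotation identities line up, and in particular confirming that the map $\tilde\canfromid_\Psi: Y_1 \to T_{\Psi,r}(Y_2)$ produced by $\frF$ is carried back to the original gluing map $m$ under $\frF^{-1}$ (cf. the compatibility recorded in Remark~\ref{key rem}). The cleanest way to control this is to work at the level of the stable $\oo$-category and package the whole construction as an equivalence of categories of certain diagrams (filtered objects / twisted complexes) over the two Lagrangian skeleta $\Gamma_\Ch$ and $\Gamma_\Cl$, where the mutation is realized as an explicit reindexing; then $\frF$ and $\frF^{-1}$ are manifestly mutually inverse. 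The spherical axioms enter precisely at the one point where the reindexing must cross the critical point, converting $S^r$ into $S^\ell[{-}1]$ (and $c_r$ into the rotated $\canfromid$), and there is nothing further to check beyond invoking (SF1)–(SF4).
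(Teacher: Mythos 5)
Your proposal matches the paper's proof: the paper likewise constructs an explicit inverse by swapping $Y_1^\sh, Y_2^\sh$, inverting $m^\sh$, passing the map $i$ through the left adjunction to get $\tilde\imath:S^\ell(T_{\Psi,r}(Y_1^\sh))[-1]\to X^\sh$, setting $X:=\Cone(\tilde\imath)[-1]$, and using (SF4) to identify $S^\ell T_{\Psi,r}[-1]\simeq S^r$ before taking cones to produce $\Delta$. The paper then leaves the round-trip verification to the reader, which you fill in with the expected cone/octahedron bookkeeping; the approach is the same.
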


\begin{proof}
We can define an inverse to $\frF$ beginning with the evident assignments 
 $Y_1 := Y_2^\sh$, $Y_1 := Y_2^\sh$,
\beq
\xymatrix{
m:Y_1  =  Y_2^\sh \ar[r]^-{m^{-1}}  & M(Y_1^\sh) = Y_2
}
\eeq

For $X$ and the triangle $\Delta$, we proceed  as follows. 
By adjunction, the map $i:T_{\Psi, r}(Y_1^\sh)[-1] \to S(X^\sh)$ provides a map $\tilde \imath: S^\ell(T_{\Psi, r}(Y_1^\sh))[-1] \to X^\sh$, and we take  $X := \Cone(\tilde \imath)[-1]$. 

To construct the triangle $\Delta$, 
note that adjunction  further provides a commutative diagram
\beq\label{eq:commadjsh}
\xymatrix{
SS^\ell(T_{\Psi, r}(Y_1^\sh))[-1] \ar[r]^-{S(\tilde \imath)} &  S(X^\sh)\\
T_{\Psi, r}(Y_1^\sh)[-1] \ar[u]^-{u_\ell} \ar[ur]_-i & 
}
\eeq
Recall (SF4) confirms the natural composition is an equivalence. 
\beq
\xymatrix{
S^\ell  T_{\Psi, r}[-1]  \ar[r] & S^\ell S S^r \ar[r] & S^r
}
\eeq
and thus we can write \eqref{eq:commadjsh} in the form
\beq\label{eq:commadjsh2}
\xymatrix{
SS^r (Y_1^\sh) \ar[r]^-{S(\tilde \imath)} &  S(X^\sh)\\
T_{\Psi, r}(Y_1^\sh)[-1] \ar[u] \ar[ur]_-i & 
}
\eeq
  Taking the cone  of each map  of \eqref{eq:commadjsh2}, we obtain another  triangle
  \beq
\xymatrix{
Y_1^\sh \ar[r] & Y_2^\sh \ar[r] &  \Cone(S(\tilde \imath)) \ar[r] & Y_1^\sh[-1]
}
\eeq
We take $\Delta$ to be the rotated shifted triangle
\beq
\xymatrix{
\Delta = (Y_2^\sh[-1] \ar[r] &  \Cone(S(\tilde \imath))[-1] \ar[r] & Y_1^\sh \ar[r] & Y_2^\sh)
}
\eeq
using the canonical identification $\Cone(S(\tilde \imath)) \simeq  S(\Cone(\tilde \imath))$.

We leave it to the reader to check the constructed functor is inverse to $\frF$.
\end{proof}


\section{Abstract wall-crossing}\label{s: awc}
In this section, we will explain  how a perverse schober on the sphere $S^2\simeq \BC\BP^1$ with a single critical point 
provides a wall-crossing formula. 

 
\subsection{Chekanov functor}

 Suppose given a perverse schober
\beq
\xymatrix{
\cP_{\Cyl, 0}=(S:\cD_\Phi \to \cD_\Psi, M:\cD_\Psi\risom \cD_\Psi)
}
\eeq
 on the  cylinder $\Cyl = \BC \setminus\{\epsilon\} \simeq \BC\BP^1 \setminus \{\epsilon, \oo\}$
with a single critical point $0\in \Cyl$.
Thus by definition, $S$ is a spherical functor, and $M$ is an invertible functor.

Following Definition~\ref{def:pd cyl}(2), the global sections $\Gamma(\Cyl, \cP_{\Cyl, 0})$ is the dg category of quintuples $(X,Y_1, Y_2, \Delta, m)$
consisting of $X\in \cD_\Phi$, $Y_1, Y_2\in \cD_\Psi$, an exact triangle 
\beq
\xymatrix{
\Delta =(Y_1[-1] \ar[r]^-i & S(X)\ar[r]^-p & Y_2\ar[r]^-{\partial} & Y_1)
}
\eeq
and an isomorphism 
$
m:Y_1 \risom   M(Y_2)
$

 Introduce the smooth perverse schober 
\beq
 \cP_\sm = (0:\{0\} \to  \cD_\Psi,M: \cD_\Psi\risom \cD_\Psi)
\eeq
on the  cylinder 
with no critical points. 

Following Definition~\ref{def:pd cyl}(2), the global sections $\Gamma(\Cyl, \cP_\sm)$ is the dg category of quintuples $(0,Y_1, Y_2, \Delta, m)$
consisting of $Y_1, Y_2\in \cD_\Psi$, an exact triangle 
\beq\label{eq:spl tri}
\xymatrix{
\Delta =(Y_1[-1] \ar[r]^-i & 0\ar[r]^-p & Y_2\ar[r]^-{\partial} & Y_1)
}
\eeq
and an isomorphism 
$m:Y_1 \risom M(Y_2).
$

Thus  $\Gamma(\Cyl, \cP_\sm) \subset \Gamma(\Cyl, \cP_\Cyl)$  is the full subcategory
of quintuples $(0, Y_1, Y_2, \Delta, m)$ with vanishing first entry.

\begin{remark}
There is an evident commutative  diagram
\beq\label{eq: ev ps map}
\xymatrix{
\ar[d]_-0\{0\} \ar[r]^-0 &  \cD_\Psi  \ar[d]^-\id  \\
\cD_\Phi \ar[r]^-S &  \cD_\Psi\\
}
\eeq
compatible with $M:\cD_\Psi\risom \cD_\Psi$ that induces the inclusion 
$\Gamma(\Cyl, \cP_0) \subset \Gamma(\Cyl, \cP_\Cyl)$.
In general, passing to the adjoints to the horizontal maps in \eqref{eq: ev ps map} does not lead to a commutative diagram. Thus it is not clear whether to consider \eqref{eq: ev ps map} as a map of perverse schobers $\cP_\sm \to \cP_\Cyl$.
But we will only consider \eqref{eq: ev ps map} where in the top right we take the full subcategory $\cD_\invar\subset \cD_\Psi$ of clean objects (see Definition~\ref{def:clean}) for which both adjoints to $S$ vanish, and so  indeed fit into a trivially commutative diagram.
\end{remark}

Let $\cL^M \cD_\Psi$ denote the dg path category with objects pairs $(Y, m)$ consisting of $Y\in \cD_\Psi$, 
and a monodromy isomorphism 
$m:Y \risom M(Y)$;
morphisms $(Y, m)\to (Y', m')$ are maps $g:Y\to Y'$ with an isomorphism
$M(g) \circ m \simeq m' \circ g$.

\begin{remark}
When $M=\id$, note that $\cL^M \cD_\Psi $ is the usual inertia dg category $\cL\cD_\Psi$
of pairs $(Y, m)$ 
consisting of $Y\in \cD_\Psi$, 
and a monodromy isomorphism 
$m:Y \risom Y$.
\end{remark}

Introduce the  {\em Chekanov functor} 
\beq
\xymatrix{
F_\Ch:\cL^M \cD_\Psi \ar[r] & \Gamma(\Cyl, \cP_\Cyl) 
&
F_\Ch(Y, m) = (0,Y, Y, \Delta_0, m) 
}
\eeq
where $\Delta_0$ is the split triangle
\beq
\xymatrix{
\Delta_0 =(Y[-1]\ar[r] & 0\ar[r] & Y\ar[r]^-{\id} & Y)
}\eeq

\begin{lemma}\label{l:equiv ch}
$F_\Ch$ is fully faithful with essential image $\Gamma(\Cyl, \cP_\sm) \subset \Gamma(\Cyl, \cP_{\Cyl, 0})$.
\end{lemma}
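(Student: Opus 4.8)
The plan is to show directly that $F_\Ch$ lands in the full subcategory $\Gamma(\Cyl,\cP_\sm)$, that it is essentially surjective onto this subcategory, and that it is fully faithful. The first point is immediate: by definition $F_\Ch(Y,m) = (0, Y, Y, \Delta_0, m)$ has vanishing first entry, so it lies in $\Gamma(\Cyl,\cP_\sm)$. For essential surjectivity, I would start with an arbitrary object $(0, Y_1, Y_2, \Delta, m)$ of $\Gamma(\Cyl,\cP_\sm)$. Here $\Delta$ is an exact triangle $(Y_1[-1] \to 0 \to Y_2 \xrightarrow{\partial} Y_1)$, and the presence of the zero object in the middle forces $\partial:Y_2\risom Y_1$ to be an isomorphism. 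Using $\partial$ I would identify this object with $F_\Ch(Y_2, m\circ\partial)$ (or $F_\Ch(Y_1, m)$ after transporting along $\partial^{-1}$); one has to check that $\partial$ assembles into an isomorphism in $\Gamma(\Cyl,\cP_\sm)$ between the given quintuple and the split one, which is a routine diagram chase using that a map of triangles with $0$ in the middle is determined by its outer components.

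For full faithfulness, I would compute the mapping complexes on both sides. A morphism $F_\Ch(Y,m)\to F_\Ch(Y',m')$ in $\Gamma(\Cyl,\cP_\Cyl)$ consists of $f:0\to 0$ (necessarily $0$), maps $g_1:Y\to Y'$, $g_2:Y\to Y'$, a lift to a map of the split triangles $\Delta_0\to\Delta_0'$, together with the commutativity datum $M(g_2)\circ m \simeq m'\circ g_1$. The map-of-triangles condition, for the split triangle with identity connecting map, forces $g_1 = g_2$ (up to coherent homotopy): the square involving $\partial = \id$ says $g_1\circ\id \simeq \id\circ g_2$. So the data reduces to a single map $g:Y\to Y'$ together with the isomorphism $M(g)\circ m \simeq m'\circ g$, which is precisely the data of a morphism $(Y,m)\to(Y',m')$ in $\cL^M\cD_\Psi$. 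I would make this precise at the level of mapping complexes (not just $\pi_0$), exhibiting a quasi-isomorphism between $\Map_{\cL^M\cD_\Psi}((Y,m),(Y',m'))$ and $\Map_{\Gamma(\Cyl,\cP_\Cyl)}(F_\Ch(Y,m), F_\Ch(Y',m'))$ — the contractibility of the space of maps $0\to 0$ and of the "middle" slot of the map of split triangles is what makes this work.

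The main obstacle I expect is the homotopy-coherent bookkeeping: "the space of lifts to a map of split triangles is contractible once the outer maps agree" and "$\partial$ an equivalence forces the quintuple to be in the image" are intuitively clear but require care to state correctly in the dg (or $\infty$-categorical) setting, since the triangle $\Delta$ carries higher-coherence data, not just a commuting square. Concretely, I would phrase $\Gamma(\Cyl,\cP_\Cyl)$ as a homotopy limit (or a category of functors out of a small diagram category) and observe that restricting to objects with first entry $0$ produces a diagram whose limit is manifestly $\cL^M\cD_\Psi$ — the split triangle slot becomes redundant because $0$ is both initial and terminal, so the relevant piece of the diagram is homotopically constant. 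That reframing converts the lemma into the statement that a limit over a diagram with a contractible (final) piece removed is unchanged, which is standard. I would then just note that this limit description matches $\cL^M\cD_\Psi$ on the nose and that the inclusion into $\Gamma(\Cyl,\cP_\Cyl)$ is the functor $F_\Ch$ as written.
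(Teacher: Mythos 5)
Your proposal is correct and follows essentially the same route as the paper's proof: essential surjectivity comes from the fact that the vanishing middle term forces $\partial:Y_2\to Y_1$ to be an isomorphism, and full faithfulness comes from the observation that the lift to a map of split triangles identifies $g_1$ with $g_2$, reducing morphisms to those of the path category $\cL^M\cD_\Psi$. The extra care you propose about homotopy-coherence (phrasing $\Gamma(\Cyl,\cP_{\Cyl,0})$ as a limit and noting the split-triangle slot is homotopically redundant) is a reasonable elaboration of what the paper leaves implicit.
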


\begin{proof}
We may use the isomorphism $\partial:Y_2\risom Y_1$ in the triangle \eqref{eq:spl tri sh} to identify $Y_1, Y_2$.
Thus any quintuple $(0, Y_1, Y_2, \Delta, m)$  with vanishing first entry is in the essential image of $F_\Ch$.

Recall morphisms $(0, Y, Y, \Delta, m) \to (0, Y', Y', \Delta', m')$ are maps $g_1:Y\to Y'$, $g_2:Y\to Y'$, with a lift to a map of triangles 
\beq
\xymatrix{
\ar@<-1.5em>[d]_-g\Delta =  (\ar@<1.25em>[d]^-{g_1[-1]}Y[-1]\ar[r] & 0 \ar[d] \ar[r] & 
Y\ar[d]^-{g_2} \ar[r]^-{\id} & Y\ar[d]^-{g_1} )\\
\Delta' =  (Y'[-1]\ar[r] & 0\ar[r]^-p & Y'\ar[r]^-{\id} & Y' )
}\eeq
and a commutativity isomorphism $M(g_2) \circ m \simeq m' \circ g_1$. The lift to triangles provides an isomorphism $g_1\simeq g_2$, and so morphisms are simply maps $g:Y\to Y'$ such that $M(g) \circ m \simeq m' \circ g$ as in the  dg path category.
\end{proof}


\subsection{Clifford functor}
We repeat here the constructions of the preceding section in a parallel mutated form.

 Consider again the given  perverse schober
\beq
\xymatrix{
\cP_{\Cyl, 0}=(S:\cD_\Phi \to \cD_\Psi, M:\cD_\Psi\risom \cD_\Psi)
}
\eeq
 on the  cylinder $\Cyl = \BC \setminus\{\epsilon\} \simeq \BC\BP^1 \setminus \{\epsilon, \oo\}$
with a single critical point $0\in \Cyl$.

Following Definition~\ref{def:mut gs}, the mutated global sections $\Gamma^\sh(\Cyl, \cP_{\Cyl, 0})$ is the dg category of quintuples $(X^\sh,Y^\sh_1, Y^\sh_2, \Delta^\sh, m^\sh)$
consisting of $X^\sh\in \cD_\Phi$, $Y^\sh_1, Y^\sh_2\in \cD_\Psi$, an exact triangle 
\beq
\xymatrix{
\Delta_0^\sh =(T_{\Psi, r}(Y^\sh_1)[-1] \ar[r]^-i & S(X^\sh)\ar[r]^-p & Y^\sh_2\ar[r]^-{\partial} & T_{\Psi, r}(Y^\sh_1))
}
\eeq
and an isomorphism 
$m^\sh: M(Y^\sh_1) \risom   Y^\sh_2$

 Consider again the smooth perverse schober 
\beq
 \cP_\sm = (0:\{0\} \to  \cD_\Psi,M: \cD_\Psi\risom \cD_\Psi)
\eeq
on the  cylinder 
with no critical points. 

Following Definition~\ref{def:mut gs}, the mutated global sections $\Gamma^\sh(\Cyl, \cP_\sm)$ is the dg category of quintuples $(0,Y^\sh_1, Y^\sh_2, \Delta_0^\sh, m^\sh)$
consisting of $Y^\sh_1, Y^\sh_2\in \cD_\Psi$, an exact triangle 
\beq\label{eq:spl tri sh}
\xymatrix{
\Delta^\sh_0 =(T_{\Psi, r}(Y^\sh_1)[-1] \ar[r]^-i & 0\ar[r]^-p & Y^\sh_2\ar[r]^-{\partial} & T_{\Psi, r}(Y^\sh_1))
}
\eeq
and an isomorphism 
$m^\sh: M(Y^\sh_1) \risom  Y^\sh_2$.
%

Thus  $\Gamma^\sh(\Cyl, \cP_\sm) \subset \Gamma^\sh(\Cyl, \cP_\Cyl)$  is the full subcategory
of quintuples $(0, Y^\sh_1, Y^\sh_2, \Delta^\sh, m^\sh)$ with vanishing first entry.

\begin{remark}
The inclusion 
$\Gamma^\sh(\Cyl, \cP_\sm) \subset \Gamma^\sh(\Cyl, \cP_{\Cyl, 0})$ is induced
by the diagram \eqref{eq: ev ps map}.
\end{remark}

Let $\cL_{M\circ T_{\Psi, r}} \cD_\Psi$ denote the dg path category of pairs $(Y^\sh, m^\sh)$ consisting of $Y^\sh\in \cD_\Psi$, 
and a monodromy isomorphism 
$m^\sh:M(T_{\Psi, r}(Y^\sh)) \risom Y^\sh$;
morphisms $(Y^\sh, m^\sh)\to ((Y^\sh)', (m^\sh)')$ are maps $g:Y\to (Y^\sh)'$ with an isomorphism
$g \circ m^\sh \simeq (m^\sh)' \circ M(T_{\Psi,r} (g))$.

\begin{remark}
When $M\circ T_{\Psi, r}=\id$, note that $\cL_{M\circ T_{\Psi, r}} \cD_\Psi $ is the usual inertia dg category $\cL\cD_\Psi$.
%
\end{remark}

%
%



%

Introduce the  {\em Clifford functor} 
\beq
\xymatrix{
F_\Cl:\cL_{M\circ T_{\Psi, r}} \cD_\Psi \ar[r] & \Gamma^\sh(\Cyl, \cP_{\Cyl, 0}) 
&
F_\Cl(Y^\sh, m^\sh) = (0,T_{\Psi, r}^{-1}(Y^\sh), Y^\sh, \Delta_0^\sh, m^\sh) 
}
\eeq
where $\Delta^\sh_0$ is the split triangle
\beq
\xymatrix{
\Delta^\sh_0 =(Y^\sh[-1]\ar[r] & 0\ar[r] & Y^\sh\ar[r]^-{\id} & Y^\sh)
}\eeq

\begin{lemma}
$F_\Cl$ is fully faithful with essential image $\Gamma^\sh(\Cyl, \cP_\Cl) \subset \Gamma^\sh(\Cyl, \cP_\Cyl)$.
\end{lemma}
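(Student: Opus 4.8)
The plan is to mimic the proof of Lemma~\ref{l:equiv ch} in the mutated setting. The key point is that the mutated global sections $\Gamma^\sh(\Cyl, \cP_{\Cyl,0})$ have the same formal shape as $\Gamma(\Cyl, \cP_{\Cyl,0})$ once one systematically replaces $Y_1$ by $T_{\Psi,r}(Y_1^\sh)$ in the first term of the triangle and $m$ by $m^\sh$. First I would observe that $\Gamma^\sh(\Cyl,\cP_\sm)\subset\Gamma^\sh(\Cyl,\cP_\Cyl)$ is, by definition, the full subcategory of quintuples with vanishing first entry $X^\sh = 0$. For such a quintuple the triangle $\Delta_0^\sh$ degenerates to $(T_{\Psi,r}(Y_1^\sh)[-1]\to 0\to Y_2^\sh\xrightarrow{\partial} T_{\Psi,r}(Y_1^\sh))$, so $\partial:Y_2^\sh\risom T_{\Psi,r}(Y_1^\sh)$ is forced to be an isomorphism. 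Hence $Y_2^\sh$ is determined up to canonical isomorphism by $Y_1^\sh$, and conversely $Y_1^\sh \simeq T_{\Psi,r}^{-1}(Y_2^\sh)$; this is exactly the parametrization used in the definition of $F_\Cl$, with $Y^\sh := Y_2^\sh$ and the isomorphism $m^\sh:M(Y_1^\sh)=M(T_{\Psi,r}^{-1}(Y^\sh))\risom Y^\sh$ rewritten as an isomorphism $M(T_{\Psi,r}(T_{\Psi,r}^{-1}(Y^\sh)))\risom$ — wait, more precisely the data of $m^\sh$ together with $\partial$ packages into the single monodromy isomorphism $m^\sh\circ(\text{shift of }\partial^{-1})$ of the form required in $\cL_{M\circ T_{\Psi,r}}\cD_\Psi$. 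So essential surjectivity onto $\Gamma^\sh(\Cyl,\cP_\sm)$ is immediate: every quintuple with vanishing first entry is isomorphic to one in the image of $F_\Cl$ by using $\partial$ to rigidify $Y_1^\sh$.

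Next I would check full faithfulness. A morphism in $\Gamma^\sh(\Cyl,\cP_\Cyl)$ between two objects in the image of $F_\Cl$ consists of maps $g_1:T_{\Psi,r}^{-1}(Y^\sh)\to T_{\Psi,r}^{-1}((Y^\sh)')$, $g_2:Y^\sh\to (Y^\sh)'$, and $f:0\to 0$, together with (i) a lift to a map of the degenerate triangles $\Delta_0^\sh$, and (ii) a commutativity isomorphism $M(g_1)\circ (m^\sh)$-side matching $(m^\sh)'\circ g_2$. Because the middle term of $\Delta_0^\sh$ is $0$ and the connecting map is the isomorphism $\partial = \id$ (after the identification $Y_2^\sh = T_{\Psi,r}(Y_1^\sh)$), condition (i) forces $g_2 = T_{\Psi,r}(g_1)$ up to the canonical identification — exactly as in Lemma~\ref{l:equiv ch} the triangle lift identifies the two component maps. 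Thus the morphism is determined by a single map $g := g_2$ subject to the single constraint coming from (ii), which unwinds to $g\circ m^\sh \simeq (m^\sh)'\circ M(T_{\Psi,r}(g))$ — precisely the morphism condition in the dg path category $\cL_{M\circ T_{\Psi,r}}\cD_\Psi$. Hence $F_\Cl$ is fully faithful with essential image $\Gamma^\sh(\Cyl,\cP_\sm)$, and I would note $\Gamma^\sh(\Cyl,\cP_\Cl)$ is by definition this subcategory $\Gamma^\sh(\Cyl,\cP_\sm)$ (the ``$\Cl$'' decoration refers to the smooth/clean sector, matching the ``$\sm$'' subscript), so the stated essential image holds.

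I expect the only genuine subtlety — the ``hard part'' — to be the bookkeeping of the degenerate triangle: one must be careful that when $S(X^\sh)=0$ the exact triangle $\Delta_0^\sh$ does not merely say $Y_2^\sh\simeq T_{\Psi,r}(Y_1^\sh)$ abstractly but carries the specific connecting map $\partial$, and that a morphism of such triangles genuinely pins down $g_2\simeq T_{\Psi,r}(g_1)$ coherently (including higher coherences in the dg/$\infty$-categorical sense). This is handled exactly as in the proof of Lemma~\ref{l:equiv ch}: the contractibility of the space of nullhomotopies of maps out of $0$ makes the triangle-lift data contractible once $g_1$ (equivalently $g_2$) is fixed, so no information is lost and none is spuriously added. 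With that observation the argument is a direct transcription of the unmutated case, and I would simply refer the reader to the proof of Lemma~\ref{l:equiv ch} for the remaining routine verifications.
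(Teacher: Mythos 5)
Your proposal is correct and matches the paper's own argument: the paper likewise uses the isomorphism $\partial:Y^\sh_2\risom T_{\Psi,r}(Y^\sh_1)$ forced by the degenerate triangle to identify the two nearby objects (giving essential surjectivity onto the quintuples with vanishing first entry, i.e.\ $\Gamma^\sh(\Cyl,\cP_\sm)$), and then defers full faithfulness to the same morphism analysis as in Lemma~\ref{l:equiv ch}. Your reading of the essential image as $\Gamma^\sh(\Cyl,\cP_\sm)$ (the statement's ``$\cP_\Cl$'' being a typo for the smooth schober) is also the intended one.
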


\begin{proof}

We may use the isomorphism $\partial:Y^\sh_2\risom T_{\Psi, r}(Y^\sh_1)$ in triangle~\eqref{eq:spl tri sh} to identify $T_{\Psi, r}(Y^\sh_1), Y^\sh_2$. Thus any quintuple $(0, Y^\sh_1, Y^\sh_2, \Delta^\sh, m^\sh)$  with vanishing first entry is in the essential image of $F_\Cl$.

The rest of the proof is the
same as that of Lemma~\ref{l:equiv ch}.
\end{proof}


\subsection{Framing identifications}

Now suppose given a perverse schober 
\beq
\cP_{S^2, 0} = (S:\cD_\Phi \to  \cD_\Psi, \tau:\id \risom  T_{\Psi, r})
\eeq
 on the  sphere $S^2 \simeq \BC\BP^1$
with a single critical point at $0\in S^2$.

Let us restrict  to a perverse schober 
\beq
\cP_{\Cyl, 0} =  \cP_{S^2, 0}|_\Cyl=(S:\cD_\Phi \to  \cD_\Psi, \id: \cD_\Psi\risom \cD_\Psi)
\eeq
 on the  cylinder $\Cyl = \BC \setminus\{\epsilon\} \simeq \BC\BP^1 \setminus \{\epsilon, \oo\}$
by
 forgetting the framing $\tau$ and taking the additional monodromy functor $M$ to be the identity.
 Consider the smooth perverse schober
\beq
 \cP_\sm = (0:\{0\} \to  \cD_\Psi,\id: \cD_\Psi\risom \cD_\Psi)
\eeq
again taking the additional monodromy functor $M$ to be the identity.

Observe that the framing $\tau:\id \risom  T_{\Psi, r}$ provides a canonical equivalence 
$\tau:\cL\cD_\Psi \risom  \cL_{T_{\Psi, r}}  \cD_\Psi$.
Thus the Chekanov and Clifford functors take the respective forms
\beq
\xymatrix{
F_\Ch:\cL \cD_\Psi \ar[r] & \Gamma(\Cyl, \cP_\Cyl) 
&
F_\Ch(Y, m) = (0,Y, Y, \Delta_0, m) 
}
\eeq
\beq
\xymatrix{
\Delta_0 =(Y[-1]\ar[r] & 0\ar[r] & Y\ar[r]^-{\id} & Y)
}\eeq
\beq
\xymatrix{
F_\Cl:\cL \cD_\Psi \ar[r] & \Gamma^\sh(\Cyl, \cP_\Cyl) 
&
F_\Cl(Y^\sh, m^\sh) = (0,Y^\sh, Y^\sh, \Delta_0^\sh, m^\sh) 
}
\eeq
\beq
\xymatrix{
\Delta^\sh_0 =(Y^\sh[-1]\ar[r] & 0\ar[r] & Y^\sh\ar[r]^-{\id} & Y^\sh)
}\eeq

\begin{remark}
 Recall  the mutation equivalence $\frF^\sh:\Gamma(\Cyl, \cP_{\Cyl, 0})\risom  \Gamma^\sh(\Cyl, \cP_{\Cyl, 0})$.
 We caution the reader that the following diagram is {\em not} commutative
\beq
\xymatrix{
 & \Gamma(\Cyl, \cP_{\Cyl, 0})\ar[dd]^-{\frF^\sh}\\
\cL \cD_\Psi\ar[ur]^-{F_\Ch} \ar[dr]^-{F_\Cl} &  \\
&  \Gamma^\sh(\Cyl, \cP_{\Cyl, 0})
}
\eeq 
We will introduce a correct intertwining relation in the next section.
\end{remark}


\subsection{Clean objects}

 Let us continue with the setup of the preceding section.

\begin{defn}\label{def:clean}
We say an object $Y \in \cD_\Psi$ is {\em clean} if $S^r(Y) \simeq 0$.

We denote by $\cD_\invar \subset \cD_\Psi$ the full subcategory of clean objects.
\end{defn}

\begin{lemma}
An object $Y \in \cD_\Psi$ is {clean} if and only if any of the following hold:
\begin{enumerate}
\item $S^\ell(Y) \simeq 0$.
\item
 $\canfromid_\Psi: \id_\Psi \to T_{\Psi, r}$ 
 is an isomorphism.
\item $\cantoid_\Psi:T_{\Psi, \ell} \to  \id_\Psi$ 
 is an isomorphism.
\end{enumerate}
\end{lemma}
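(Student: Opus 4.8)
The plan is to prove the chain of equivalences by unwinding the definitions of the adjoint triple $(S^\ell, S, S^r)$ and the defining triangles of the spherical functor, using the axioms (SF1)--(SF4) which by Anno--Logvinenko are all available. The statement asserts that for $Y \in \cD_\Psi$ the following are equivalent: (a) $S^r(Y) \simeq 0$ (the definition of clean); (1) $S^\ell(Y) \simeq 0$; (2) the component $\canfromid_\Psi \colon Y \to T_{\Psi, r}(Y)$ is an isomorphism; (3) the component $\cantoid_\Psi \colon T_{\Psi, \ell}(Y) \to Y$ is an isomorphism. I would organize the argument as a cycle of implications, or better, show each of (1), (2), (3) is separately equivalent to (a).

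First I would treat (a) $\Leftrightarrow$ (2): this is immediate from the defining triangle $SS^r \xrightarrow{c_r} \id_\Psi \xrightarrow{\canfromid_\Psi} T_{\Psi, r}$. Applying this to $Y$, the map $\canfromid_\Psi \colon Y \to T_{\Psi,r}(Y)$ fits in a triangle with third term $SS^r(Y)[1]$, so $\canfromid_\Psi$ is an isomorphism on $Y$ if and only if $SS^r(Y) \simeq 0$. Now if $S^r(Y) \simeq 0$ then certainly $SS^r(Y) \simeq 0$; conversely, if $SS^r(Y) \simeq 0$, apply $S^\ell$ and use the unit $u_\ell \colon \id_\Phi \to S^\ell S$ together with (SF2) or (SF4) — more directly, note that $S^r$ is conservative on its essential image, or argue that $SS^r(Y)\simeq 0$ forces $S^r(Y)\simeq 0$ because $S^\ell S S^r(Y) \simeq 0$ and by (SF4) $S^\ell T_{\Psi,r}[-1] \simeq S^r$ applied appropriately; I would pick whichever of (SF2)/(SF4) most cleanly recovers $S^r(Y)$ from $SS^r(Y)$. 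Dually, (a) $\Leftrightarrow$ (3) follows from the triangle $T_{\Psi, \ell} \xrightarrow{\cantoid_\Psi} \id_\Psi \xrightarrow{u_\ell} SS^\ell$, giving that $\cantoid_\Psi$ on $Y$ is an isomorphism iff $SS^\ell(Y)\simeq 0$, which should be equivalent to $S^\ell(Y)\simeq 0$, hence (3) $\Leftrightarrow$ (1). So the crux reduces to (a) $\Leftrightarrow$ (1), i.e. $S^r(Y) \simeq 0 \Leftrightarrow S^\ell(Y) \simeq 0$.

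For (a) $\Leftrightarrow$ (1), the key input is axiom (SF2): the natural composition $S^r \to S^r S S^\ell \to T_{\Phi, r} S^\ell[1]$ is an equivalence of functors $\cD_\Psi \to \cD_\Phi$, and since $T_{\Phi, r}$ is an equivalence (SF3), we get $S^r(Y) \simeq 0$ if and only if $S^\ell(Y)[1] \simeq 0$ if and only if $S^\ell(Y) \simeq 0$. (Symmetrically (SF4) gives the reverse implication directly.) Assembling: (1) $\Leftrightarrow$ (a) via (SF2)+(SF3), (2) $\Leftrightarrow$ (a) via the $c_r$-triangle, and (3) $\Leftrightarrow$ (1) via the $u_\ell$-triangle, which chains to (3) $\Leftrightarrow$ (a). The main obstacle I anticipate is purely bookkeeping: making sure the natural transformation in (SF2) really is the composite that identifies $S^r$ with $T_{\Phi,r}S^\ell[1]$ as stated, and correctly tracking the shifts and the direction of the canonical maps $\canfromid_\Psi$ versus $\cantoid_\Psi$ so that the "iff" in each triangle argument is applied to the correct object $Y$ rather than to the functor globally — the statement is about $\canfromid_\Psi$ being an isomorphism \emph{when evaluated on the specific object $Y$}, not about it being an isomorphism of functors.
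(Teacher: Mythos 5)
Your proposal is correct and follows essentially the same route as the paper: (a)$\Leftrightarrow$(1) via (SF2) together with (SF3), and (2),(3) read off from the defining triangles $SS^r\to\id_\Psi\to T_{\Psi,r}$ and $T_{\Psi,\ell}\to\id_\Psi\to SS^\ell$. The one step you hedge on --- deducing $S^r(Y)\simeq 0$ from $SS^r(Y)\simeq 0$ --- is cleanest via the triangle identity $S^r c_r\circ u_r S^r=\id_{S^r}$, which exhibits $S^r(Y)$ as a retract of $S^rSS^r(Y)\simeq 0$; the paper leaves this implicit as well.
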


\begin{proof}
For the equivalence with property (1), recall (SF2):
the natural composition
$$
\xymatrix{
S^r \ar[r] & S^r S S^\ell \ar[r] & T_{\Phi, r} S^\ell[1]
}
$$
is an equivalence and (SF3): $T_{\Phi, r}$ is an equivalence.

The equivalences with properties (2) and (3) are immediate from the triangles defining the canonical maps.
\end{proof}

\begin{remark}
Note that the monodromy $T_{\Psi, r}$, and hence also its inverse $T_{\Psi, \ell}$,  preserves clean objects
by property (1) of the lemma and (SF4).
\end{remark}

\begin{remark}
It is important to distinguish between a framing 
$
\tau:\id \risom T_{\Psi, r}
$
 and the canonical map
$
\canfromid_\Psi: \id_\Psi \to  T_{\Psi, r}.
$
The framing $\tau$ is an additional structure not intrinsic to the perverse schober, and by definition, it is required to be an isomorphism. The canonical  map $\canfromid_\Psi$
is intrinsic to the perverse schober, but not necessarily an isomorphism when evaluated on some objects. 
\end{remark}

\begin{example}[Smooth hypersurfaces]\label{ex hyper clean}
Recall from Example~\ref{ex hyper} the perverse schober
\beq
\cP_{Y \subset X} = (i_*:\Coh(Y) \to \Coh(X))
\eeq
An object $\cF\in \Coh(X)$ is clean
if and only if $i^*\cF \simeq 0$. Equivalently,
an object $\cF\in \Coh(X)$ is clean
if and only if either (hence both) of the maps 
 \beq
 \xymatrix{
 \cF\ar[r]^-\sigma &  \cF \otimes_{\cO_X} \cL_X
&  \cF \otimes_{\cO_X} \cL^\vee_X \ar[r]^-{\sigma^\vee} &   \id
 }
 \eeq
  is an isomorphism. From any of the above descriptions, we see that 
  an object $\cF\in \Coh(X)$ is clean if and only if it is supported away from $Y = \{\sigma = 0\}$.
\end{example}

Now introduce the perverse schober 
\beq
 \cP_\invar= (0:\{0\} \to  \cD_\invar,\id: \cD_\invar\risom \cD_\invar)
\eeq
on the  cylinder 
with no critical points by taking the full dg subcategory  $\cD_\invar\subset \cD_\Psi$ of clean objects
and the additional monodromy functor $M$ to be the identity.

Given any automorphism $\mathfrak m$ of the identity 
functor of $\cD_\invar$, we have a corresponding translated inverse functor on the dg inertia category
\beq
\xymatrix{
K_\frakm:\cL \cD_\invar  \ar[r]^-\sim & \cL \cD_\invar 
&
K_{\frakm}(Y, m) = (Y, m^{-1}\circ \frakm)
}
\eeq
Note we have a canonical identification $m^{-1}\circ \frakm\simeq \frakm \circ m^{-1}$ by functoriality.

Now we have the intertwining:

\begin{prop}\label{p:intertwine}
 The restrictions of the  Chekanov and Clifford functors to clean objects fits into a canonically commutative diagram
 \beq
\xymatrix{
\ar[d]_-{K_\frakm} \cL \cD_\invar \ar[r]^-{F_\Ch} & \Gamma(\Cyl, \cP_\Cyl)\ar[d]^-{\frF^\sh}\\
\cL \cD_\invar \ar[r]^-{F_\Cl} &  \Gamma^\sh(\Cyl, \cP_\Cyl)
}
\eeq 
where the automorphism $\frakm$  of the identity 
functor of $\cD_\invar$ is the composition 
\beq
\xymatrix{
\frakm : \id \ar[r]^-{\canfromid_\Psi} &   T_{\Psi, r} \ar[r]^-{\tau^{-1}} & \id
}
\eeq
of the  canonical morphism $\canfromid_\Psi$ (which is invertible on clean objects)
and the inverse of the framing $\tau$.
\end{prop}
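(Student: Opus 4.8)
The plan is to compute the composite $\frF^\sh\circ F_\Ch$ on a clean object $(Y,m)\in\cL\cD_\invar$ and identify the outcome with $F_\Cl\bigl(K_\frakm(Y,m)\bigr)$, producing the comparison isomorphism by hand. By Lemma~\ref{l:equiv ch} (in its framed form) we have $F_\Ch(Y,m)=(0,Y,Y,\Delta_0,m)$ with $\Delta_0=(Y[-1]\to 0\to Y\xrightarrow{\id}Y)$. Running the recipe of Section~\ref{s: ps} for $\frF^\sh$ on this quintuple, the adjoint of $p\colon S(0)=0\to Y$ is $\tilde p\colon 0\to S^r(Y)$, so $X^\sh=\Cone(\tilde p)=S^r(Y)$; since $Y$ is clean this vanishes, and $\frF^\sh(F_\Ch(Y,m))$ has zero first entry, hence lies in $\Gamma^\sh(\Cyl,\cP_\sm)$, the essential image of $F_\Cl$. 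For the same reason $\Cone(S(\tilde p))\simeq S(X^\sh)\simeq 0$, so the triangle $\Delta^\sh$ of \eqref{eq:key rot tri} collapses to $(T_{\Psi,r}(Y)[-1]\to 0\to Y\xrightarrow{\tilde\canfromid_\Psi}T_{\Psi,r}(Y))$, while the monodromy datum produced by $\frF^\sh$ is $m^{-1}\colon Y\risom Y$.

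The next step is to pin down $\tilde\canfromid_\Psi$ and insert the framing. In $F_\Ch(Y,m)$ the triangle $\Delta_0$ has $Y_1=Y_2=Y$ with connecting map $\partial=\id$, so Remark~\ref{key rem} gives $\tilde\canfromid_\Psi=\canfromid_\Psi$ evaluated at $Y$; by the characterization of clean objects via invertibility of $\canfromid_\Psi$ (the lemma following Definition~\ref{def:clean}) this is an isomorphism, which is exactly what is needed for the collapsed $\Delta^\sh$ to be a genuine triangle. Now pass to the framed normalization of Section~\ref{s: awc}: transporting along $\tau\colon\id\risom T_{\Psi,r}$ turns $T_{\Psi,r}(Y)$ into $Y$ and the map $\canfromid_\Psi$ into $\tau^{-1}\circ\canfromid_\Psi=\frakm$. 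Thus, in the normalized picture, $\frF^\sh(F_\Ch(Y,m))=(0,Y,Y,\Delta,m^{-1})$ where $\Delta=(Y[-1]\to 0\to Y\xrightarrow{\frakm_Y}Y)$.

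Finally I would compare with $F_\Cl(K_\frakm(Y,m))=F_\Cl(Y,m^{-1}\circ\frakm)=(0,Y,Y,\Delta_0,m^{-1}\circ\frakm)$, whose triangle $\Delta_0$ carries the connecting map $\id$. The isomorphism of quintuples $\frF^\sh(F_\Ch(Y,m))\risom F_\Cl(K_\frakm(Y,m))$ is given by $f=\id_0$, $g_1=\id_Y$, $g_2=\frakm_Y$: the only square in the induced map of triangles that is not trivially commutative is $\id\circ\frakm_Y=\frakm_Y\circ\id$, and the compatibility with the monodromy data reduces to $\frakm_Y\circ m^{-1}\simeq m^{-1}\circ\frakm_Y$, which is the canonical identification recorded right after the definition of $K_\frakm$. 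Since $g_2=\frakm_Y$ is invertible, this is an isomorphism. Naturality in $(Y,m)$ follows from $\frakm$ being an automorphism of the identity functor of $\cD_\invar$: for a morphism $\phi\colon(Y,m)\to(Y',m')$, both $\frF^\sh\circ F_\Ch$ and $F_\Cl\circ K_\frakm$ send $\phi$ to $\phi$ on the relevant $Y$-entries, and the comparison square closes up because $\frakm_{Y'}\circ\phi=\phi\circ\frakm_Y$.

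The main obstacle is bookkeeping rather than a hard idea: one must use the framing $\tau$ consistently in both normalizations — the one bringing $F_\Cl$ and $\Gamma^\sh$ into the simple form of Section~\ref{s: awc}, and the one rewriting $\canfromid_\Psi$ as $\frakm$ inside $\Delta^\sh$ — and one must track the directions of $m$, $m^\sh$, $m^{-1}$ and the distinction between $\frakm\circ m^{-1}$ and $m^{-1}\circ\frakm$. The cleanness hypothesis is used at exactly two places, the vanishing of $X^\sh$ and $\Cone(S(\tilde p))$ and the invertibility of $\canfromid_\Psi$ on $Y$; this is the only point at which the restriction to $\cL\cD_\invar$ (rather than all of $\cL\cD_\Psi$) is essential, since without it the reduced $\Delta^\sh$ would fail to split and $\frF^\sh\circ F_\Ch$ would leave $\Gamma^\sh(\Cyl,\cP_\sm)$.
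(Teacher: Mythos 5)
Your argument is correct and follows the paper's own proof essentially verbatim: compute $\frF^\sh(F_\Ch(Y,m))=(0,Y,Y,\Delta^\sh,m^{-1})$ using cleanness to kill $X^\sh=S^r(Y)$, identify $\tilde\canfromid_\Psi=\canfromid_\Psi$ via Remark~\ref{key rem} and rewrite it as $\frakm$ through the framing, then match with $F_\Cl(K_\frakm(Y,m))$ by the triangle map $g_1=\id$, $g_2=\frakm$ and the canonical commutation $\frakm\circ m^{-1}\simeq m^{-1}\circ\frakm$. The only additions beyond the paper's proof are the explicit unwinding of the mutation recipe and the naturality check, both fine.
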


\begin{proof} 
For $(Y, m) \in \cL \cD_\invar$, we can apply the definitions
to find:
\beq
\xymatrix{
\frF^\sh(F_\Ch(Y, m)) = \frF^\sh(0,Y, Y, \Delta, m) =  (0, Y, Y, \Delta^\sh, m^{-1}) 
}
\eeq
where $\Delta^\sh$ is the  triangle
\beq
\xymatrix{
\Delta^\sh =(Y[-1]\ar[r] & 0\ar[r] & Y\ar[rr]^-{\frakm = \tau^{-1}\circ \canfromid_\Psi} && Y)
}\eeq
as highlighted in Remark~\ref{key rem}.

On the other hand,  we  can also apply the definitions
to find:
\beq
\xymatrix{
F_\Cl(K_\frakm(Y, m)) = \F_\Cl(Y, m^{-1} \circ \frakm) =   (0, Y, Y, \Delta_0^\sh,  m^{-1} \circ \frakm) 
}
\eeq
where $\Delta^\sh_0$ is the triangle
\beq
\xymatrix{
\Delta_0^\sh =(Y[-1]\ar[r] & 0\ar[r] & Y\ar[r]^-{\id} & Y)
}\eeq

Thus we seek an isomorphism
\beq
\xymatrix{
 (0,Y, Y, \Delta^\sh, m^{-1}) \ar[r]^-\sim &
 (0, Y, Y, \Delta^\sh_0, m^{-1}\circ \frakm) 
}
\eeq
or in other words, a map of triangles
\beq
\xymatrix{
\ar@<-1.5em>[d]_-g\Delta^\sh =  (\ar@<1.25em>[d]^-{g_1[-1]}Y[-1]\ar[r] & 0 \ar[d]^-{0} \ar[r] & 
Y\ar[d]^-{g_2} \ar[rr]^-{\frakm = \tau^{-1}\circ \canfromid_\Psi} && Y\ar[d]^-{g_1} )\\
\Delta_0^\sh =  (Y[-1]\ar[r] & 0\ar[r] & Y\ar[rr]^-{\id} && Y )
}\eeq
with a commutativity isomorphism $g_2 \circ m^{-1}  \simeq m^{-1} \circ \frakm \circ g_1$.
We take $g_1 = \id$, $g_2 = \frakm$, and the canonical isomorphism
$g_2 \circ m^{-1} =  \frakm \circ m^{-1}  \simeq m^{-1} \circ \frakm \simeq m^{-1} \circ \frakm  \circ g_1$.
\end{proof}

\begin{corollary}\label{c:trans}
$
K_\frakm \simeq F^{-1}_{\Cl} \circ \frF^\sh \circ F_{\Ch}.
$
\end{corollary}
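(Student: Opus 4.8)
\textbf{Proof proposal for Corollary~\ref{c:trans}.}

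The plan is to obtain the statement as a formal consequence of Proposition~\ref{p:intertwine} together with full faithfulness of the Clifford functor. First I would recall that Proposition~\ref{p:intertwine} supplies a canonical isomorphism of functors
\beq
\xymatrix{
\frF^\sh \circ F_\Ch \simeq F_\Cl \circ K_\frakm : \cL\cD_\invar \ar[r] & \Gamma^\sh(\Cyl, \cP_\Cyl),
}
\eeq
with $\frakm = \tau^{-1}\circ \canfromid_\Psi$ as in that proposition. Next I would invoke the Lemma preceding this corollary, which asserts that $F_\Cl$ is fully faithful with essential image $\Gamma^\sh(\Cyl, \cP_\Cl)\subset \Gamma^\sh(\Cyl, \cP_\Cyl)$; hence $F_\Cl$ restricts to an equivalence onto its essential image, and we may denote a chosen quasi-inverse on that image by $F^{-1}_\Cl$. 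The one small verification needed is that $K_\frakm$ genuinely lands in $\cL\cD_\invar$, which is clear since $K_\frakm(Y,m)=(Y, m^{-1}\circ\frakm)$ leaves the underlying object $Y$ unchanged and $\frakm$ is an automorphism of the identity functor of $\cD_\invar$; consequently $F_\Cl\circ K_\frakm$ factors through $\Gamma^\sh(\Cyl, \cP_\Cl)$, so the composite $\frF^\sh \circ F_\Ch$ does as well.

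Having located everything inside the essential image of $F_\Cl$, I would then precompose the isomorphism $\frF^\sh \circ F_\Ch \simeq F_\Cl \circ K_\frakm$ with $F^{-1}_\Cl$ to conclude
\beq
\xymatrix{
F^{-1}_\Cl \circ \frF^\sh \circ F_\Ch \simeq F^{-1}_\Cl \circ F_\Cl \circ K_\frakm \simeq K_\frakm,
}
\eeq
using the unit isomorphism $F^{-1}_\Cl\circ F_\Cl \simeq \id_{\cL\cD_\invar}$ of the equivalence. This is exactly the claimed identity.

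I do not expect any genuine obstacle here: the content is entirely in Proposition~\ref{p:intertwine}, and the corollary is the rephrasing obtained by canceling $F_\Cl$. The only point meriting a sentence of care is the meaning of $F^{-1}_\Cl$ — it is the quasi-inverse of $F_\Cl$ on its essential image, and the argument above shows that the source functor $\frF^\sh \circ F_\Ch$ takes values in that image, so the precomposition is well defined up to canonical isomorphism. Should one wish to avoid even mentioning essential images, an equivalent route is to note that $F_\Cl$ being fully faithful means the induced map on mapping complexes is an equivalence, so the isomorphism class of functors into $\Gamma^\sh(\Cyl, \cP_\Cyl)$ is detected after applying any left quasi-inverse, giving the same conclusion.
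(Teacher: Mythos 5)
Your argument is correct and is exactly the intended one: the paper states Corollary~\ref{c:trans} as an immediate consequence of the intertwining isomorphism $\frF^\sh \circ F_\Ch \simeq F_\Cl \circ K_\frakm$ of Proposition~\ref{p:intertwine}, cancelled against the fully faithful $F_\Cl$. Your extra care about the meaning of $F^{-1}_\Cl$ and about $\frF^\sh\circ F_\Ch$ landing in the essential image of $F_\Cl$ is a reasonable (and correct) elaboration of what the paper leaves implicit.
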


\begin{example}[Smooth hypersurfaces]\label{ex hyper wc}
Recall from Example~\ref{ex hyper} the perverse schober
\beq
\cP_{Y \subset X} = (i_*:\Coh(Y) \to \Coh(X))
\eeq
Recall framings 
are equivalent to  isomorphisms
\beq
\xymatrix{
\tau:\cO_X \ar[r]^-\sim &  \cL_X
}
\eeq
or in other words, non-vanishing sections of $ \cL_X$.
Recall an object $\cF\in \Coh(X)$ is clean
if and only if $\cF$  is supported away from $Y\subset X$. 
For a clean object $\cF\in \Coh(X)$, we have the central automorphism
\beq
\xymatrix{
\frakm : \cF \ar[r]^-{\sigma} &  \cF\otimes_{\cO_X} \cL_X\ar[r]^-{\tau^{-1}} &  \cF
}
\eeq
We may view $\frakm = \sigma/\tau$ as a function vanishing on $Y=\{\sigma = 0\}$.
\end{example}


\section{Application to toric mutation}

Now let us return to the local model of toric mutations  introduced in Section~\ref{s:local geom} and further discussed in~\ref{s:back to geom}. We will adopt the constructions and notation established therein.

We will derived the wall-crossing formula~\eqref{eq:mut} from the formalism developed in Section~\ref{s: ps},
specifically Corollary~\ref{c:trans}.

As recalled in Section~\ref{s:lgps},
there is a mirror equivalence of perverse schobers $\cP_A \simeq \cP_B$, i.e. a canonically commutative diagram with vertical equivalences
\beq\label{eq:msps}
\xymatrix{
 \ar[d]^-\sim\mu\Sh_{L}(\BC^n)\ar[r]^-{j^*} &  \mu\Sh_{L^\circ}(\BC^n) \ar[d]^-\sim\\
\Perf_\proper(P_{n-2})\ar[r]^-{i_*} &  \Perf_\proper(\BT^\vee_0)
}
\eeq

First, let us extend $\cP_A$ to a perverse schober on the cylinder
by taking the additional monodromy functor to be the identity. Let us write $\cP_{A, \sm}$
for the  perverse schober on the cylinder with the same generic structure but trivial vanishing category.

Then it is an easy exercise, whose proof we sketch, to deduce the following
from Theorem~\ref{thm:lgps}.

\begin{prop}
For the Chekanov and Clifford skeleta $L_{\Ch}, L_\Cl\subset M$, there are natural equivalences
\beq\label{eq:glob sects}
\xymatrix{
  \mu\Sh_{L_\Ch}(M)  \simeq \Gamma(\Cyl, \cP_{A}) &
\mu\Sh_{L_\Cl}(M) \simeq \Gamma^\sh(\Cyl, \cP_{A})  
}
\eeq
Moreover,  for the Chekanov and Clifford  tori $T_\Ch\subset L_{\Ch}, T_\Cl \subset L_\Cl$, the above equivalences restrict to full subcategories to give
\begin{equation*}
\xymatrix{
 \Loc(T_\Ch) \simeq \mu\Sh_{T_\Ch}(M)  \simeq \Gamma(\Cyl, \cP_{A, \sm})&
\Loc(T_\Cl) \simeq \mu\Sh_{T_\Cl}(M)  \simeq \Gamma^\sh(\Cyl, \cP_{A, \sm})
}
\end{equation*}
\end{prop}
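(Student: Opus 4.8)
The plan is to reduce the claimed equivalences to the microlocal sheaf analogue of the Kapranov–Schechtman dictionary for perverse schobers, combined with a local cut-and-paste argument for microlocal sheaves along the Chekanov and Clifford skeleta. First I would observe that $\Cyl = \BC\setminus\{\epsilon\}$ is covered by the neighborhood $D$ of the critical value $0$ and an open annulus $A$ not containing $0$, and that $L_\Ch = \gamma_\Ch \cup I_\Ch$ is glued from the piece of the skeleton over $D$ (which recovers the spherical functor data $(S:\cD_\Phi\to\cD_\Psi)$ of $\cP_A$ via Theorem~\ref{thm:lgps}) and the piece over the annulus $A$ (which, since $\gamma_\Ch$ does not wind around $0$, contributes the ``nearby'' data $\cD_\Psi$ with trivial monodromy). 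The descent/gluing property for microlocal sheaves then expresses $\mu\Sh_{L_\Ch}(M)$ as the homotopy fiber product assembling $X\in\cD_\Phi$, two copies $Y_1,Y_2\in\cD_\Psi$ of the nearby sections on the two sides of the cut, the exact triangle $\Delta$ relating $S(X)$ to $Y_1,Y_2$ coming from the vanishing-cycle triangle over $D$, and the identification $m:Y_1\risom Y_2$ (here $M=\id$) coming from parallel transport through the annulus. This is precisely the data of Definition~\ref{def:pd cyl}(2), giving $\mu\Sh_{L_\Ch}(M)\simeq\Gamma(\Cyl,\cP_A)$.

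For the Clifford skeleton $L_\Cl = \gamma_\Cl\cup I_\Cl$, the combinatorics are the same except that the circle $\gamma_\Cl$ now encircles $0$, so the parallel-transport identification that closes up the annulus picks up the nearby monodromy $T_{\Psi,r}$; equivalently the cut must be taken on the far side of the critical value. Running the identical gluing argument then produces quintuples $(X^\sh,Y_1^\sh,Y_2^\sh,\Delta^\sh,m^\sh)$ with $\Delta^\sh$ of the mutated shape in Definition~\ref{def:mut gs}, i.e. $\mu\Sh_{L_\Cl}(M)\simeq\Gamma^\sh(\Cyl,\cP_A)$. I would make the comparison of the two cuts rigorous by invoking the mutation equivalence $\frF^\sh:\Gamma(\Cyl,\cP_{\Cyl,0})\risom\Gamma^\sh(\Cyl,\cP_{\Cyl,0})$ already proved in the Lemma of Section~\ref{s: ps}, so that only one of the two identifications needs a from-scratch verification and the other follows formally.

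For the restriction to tori, note $T_\Ch\subset L_\Ch$ is the sublocus over $\gamma_\Ch$ alone (the skeleton with the thimble interval $I_\Ch$ removed), so $\mu\Sh_{T_\Ch}(M)$ is the full subcategory of $\mu\Sh_{L_\Ch}(M)$ on objects whose microlocal stalk along the thimble vanishes; under the gluing description this is exactly the condition that the $\cD_\Phi$-component $X$ is zero, i.e. the full subcategory $\Gamma(\Cyl,\cP_{A,\sm})\subset\Gamma(\Cyl,\cP_A)$ of quintuples with vanishing first entry. The same reasoning with $T_\Cl$ and $I_\Cl$ gives $\mu\Sh_{T_\Cl}(M)\simeq\Gamma^\sh(\Cyl,\cP_{A,\sm})$, and the standard identification $\mu\Sh_{T}(M)\simeq\Loc(T)$ for a Lagrangian torus $T$ (a circle bundle over a torus, contractible microlocal model) supplies the outer equivalences. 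The main obstacle is the gluing/descent step: one must check that the Kapranov–Schechtman ``perverse sheaf of categories on $\BC\BP^1$'' formalism genuinely computes the microlocal sheaf category along these explicit skeleta $L_\Ch, L_\Cl$ in $M=\{W\neq\epsilon\}$, including getting the monodromy-twist in the Clifford case with the correct orientation; everything else (the bookkeeping of which component vanishes, the torus-to-local-systems identification) is routine. Since the Proposition is billed as an ``easy exercise, whose proof we sketch,'' I would present exactly this outline rather than a full verification of the microlocal descent package, citing \cite{Nlgps} for the latter.
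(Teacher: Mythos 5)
Your outline is correct in substance and arrives at the same bookkeeping as the paper, but the decomposition is genuinely different. The paper does not cover $\Cyl$ by a disk around $0$ and an annulus; it takes a small neighborhood $N$ of the circle $\gamma_\Ch$ and identifies the pair $(N,\Gamma_\Ch\cap N)$ with $(B^*S^1,\Lambda\cap B^*S^1)$, where $\Lambda$ is the zero section together with a single conormal ray at the point where $I_\Ch$ attaches. The one local computation $\mu\Sh_{\Lambda\cap B^*S^1}(B^*S^1)\simeq\Perf_\proper(\BA^1)$, rewritten as quadruples $(Y_1,Y_2,\partial,m)$ with restriction to the ray computing $\Cone(\partial)[-1]$, produces the two nearby objects, the boundary map, and the monodromy identification in one stroke; taking the product with $\Loc(T^\vanish_\Ch)$ and invoking Theorem~\ref{thm:lgps} only over the thimble then supplies $X$ and the triangle $\Delta$. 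This buys a more concrete reduction: the only inputs are one standard cotangent-bundle model and the already-established mirror theorem, whereas your cover leans on the general descent package for microlocal sheaf categories that you yourself flag as the main obstacle. Two cautions on your version. First, the proposed shortcut of deducing the Clifford equivalence from the Chekanov one via $\frF^\sh$ is not free: it presupposes an identification $\mu\Sh_{L_\Cl}(M)\simeq\mu\Sh_{L_\Ch}(M)$, which is essentially the skeletal mutation whose effect the whole paper is trying to extract; the paper instead reruns the same local argument for $\gamma_\Cl$, where the winding around $0$ produces the $T_{\Psi,r}$-twisted triangle of Definition~\ref{def:mut gs} directly. Second, support on $T_\Ch$ forces vanishing of the microlocal restriction along all of $I_\Ch$, i.e.\ both $X\simeq 0$ and invertibility of $\partial$; identifying this with the ``vanishing first entry'' condition defining $\Gamma(\Cyl,\cP_{A,\sm})$ uses that the triangle ties the two conditions together and that $S=i_*$ is conservative, which is worth saying explicitly.
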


\begin{proof}
Let us focus on the first equivalence of \eqref{eq:glob sects}; the second is similar.

Observe that in a small neighborhood $N\subset \BC$ around 
the circle $\gamma_\Ch$, we have an isomorphism of pairs
$(N, \Gamma_\Ch\cap N) \simeq (B^*S^1,  \Lambda\cap B^*S^1)$, where $B^*S^1 \subset T^*S^1$ is a small neighborhood of the zero-section $S^1 \subset T^*S^1$, and $\Lambda\subset T^*S^1$ is the union of the zero-section $S^1 \subset T^*S^1 $ and a single conormal ray $T^+_p S^1 \subset T^*S^1$ based at a point $p\in S^1$.
It is a standard exercise to see 
\beq\label{eq:a1}
\xymatrix{
\mu\Sh_{ \Lambda\cap B^*S^1}(B^*S^1) \simeq \Perf_{\proper}(\BA^1)
}
\eeq

Typically one views objects of $\Perf_{\proper}(\BA^1)$ as objects $Y\in \Perf(pt)$ together with an endomorphism. Equivalently, we can view objects as quadruples $(Y_1, Y_2, \partial, m)$ consisting of objects $Y_1, Y_2\in \Perf(pt)$, a map $\partial:Y_2\to Y_1$, and an isomorphism $m:Y_1\risom Y_2$. Namely, one uses $m$ to identify $Y_2$ and $Y_1$ so that $\partial $ becomes an endomorphism. Note we can normalize the equivalence \eqref{eq:a1} so that restriction to the ray  $T^+_p S^1$ corresponds to forming the shifted cone $\Cone(\partial)[-1]$.

Next, observe that we also have an isomorphism of pairs 
$(W^{-1}(N), L_\Ch\cap W^{-1}(N)) \simeq (B^*S^1,  \Lambda\cap B^*S^1) \times (T^*T^\vanish_\Ch, T^\vanish_\Ch)$,
and hence a natural equivalence 
\beq
\xymatrix{
\mu\Sh_{L_\Ch\cap W^{-1}(N)}(W^{-1}(N)) \simeq \Perf_{\proper}(\BA^1) \otimes \Loc(T^\vanish_\Ch)
}
\eeq
Thus to see the first equivalence of \eqref{eq:glob sects}, one can apply Theorem~\ref{thm:lgps} to see that
$\mu\Sh_{L_\Ch}(M)$ classifies quadruples $(X, Y_1, Y_2, \Delta, m)$ as in the definition of 
$\Gamma(\Cyl, \cP_{A})$. Moreover, the asserted restricted equivalence is evident
\beq
\xymatrix{
\Loc(T_\Ch) \simeq \mu\Sh_{T_\Ch\cap W^{-1}(N)}(W^{-1}(N)) \simeq \Perf_{\proper}(\Gm) \otimes \Loc(T^\vanish_\Ch)
\simeq 
\Loc(T_\Ch)
}
\eeq
\end{proof}

\begin{remark}
Following Remark~\ref{rem:crit},
we could also consider $  \mu\Sh_{L_\crit}(M)$ for the  critical skeleton $L_\crit \subset M$.
It is  again straightforward to construct a natural equivalence  $  \mu\Sh_{L_\crit}(M)  \simeq \Gamma(\Cyl, \cP_{A})$
as a consequence of  Theorem~\ref{thm:lgps}. Here it is less evident how to speak about the full subcategories
corresponding to $\mu\Sh_{T_\Ch}(M) , \mu\Sh_{T_\Cl}(M)$.
\end{remark}

Now let us consider framings for $\cP_A$.

On the one hand, the circle $\gamma_\Ch$ does not wind around $0\in \BC$, and so we have a canonical splitting
$T^\vanish_\Ch \times \gamma_\Ch\risom T_\Ch,
$
and hence an identification for the inertia stack 
\beq
\xymatrix{
 \cL\Loc(T^\vanish_\Ch) \simeq \Loc(T^\vanish_\Ch\times S^1) \simeq 
\Loc(T_\Ch)
}
\eeq

On the other hand, the circle $\gamma_\Cl$ winds once around $0\in \BC$, and so a splitting
$T^\vanish_\Cl \times \gamma_\Cl\risom T_\Cl$ provides a framing $\tau$.
 The choice of a coordinate, say $z_n$, from among $z_1, \ldots, z_n$ gives such a splitting
\beq
\xymatrix{
T^\vanish_\Cl \times \gamma_\Cl \ar[r]^-\sim &  T_\Cl 
& 
((z_1, \ldots, z_n), e^{i\theta})  \ar@{|->}[r] & (z_1, \ldots, z_{n-1}, e^{i\theta} z_n)
}
\eeq
and thus a grading $\tau$.
The splitting in turn provides an identification for the inertia stack 
\beq
\xymatrix{
 \cL\Loc(T^\vanish_\Cl) \simeq \Loc(T^\vanish_\Cl\times S^1) \simeq 
\Loc(T_\Cl)
}
\eeq

\begin{remark}
Under the equivalence $\cP_A \simeq \cP_B$, the framing given by the coordinate $z_n$  corresponds to the
homogenous coordinate section $x_n:\cO \to \cO(1)$.
\end{remark}

More fundamentally, the choice of the coordinate $z_n$ provides an extension 
\beq
\xymatrix{
\ol W:\BC^{n-1}\times\BC\BP^1 \ar[r] & \BC\BP^1 & 
 \ol W = [W, z]
}
\eeq 
where we equip $\BC^{n-1}$ with coordinates $z_1, \ldots, z_{n-1}$, and $\BC\BP^1$ with coordinates $[z_n, z]$.
 
 The circles $\gamma_\Ch, \gamma_\Cl\subset \BC$, with their counterclockwise orientations, are naturally isotopic when regarded within
$\BC\BP^1 \setminus \{0\}\simeq\BC$, but with opposite orientations. 
We also obtain an identification $T^\vanish_\Ch \simeq T^\vanish_\Cl$ by parallel transporting each within $\BC^{n-1}\times\BC\BP^1$  above the respective intervals $[\epsilon - \rho_\Ch, \oo]$, $[-\oo, \epsilon - \rho_\Cl]$
 to the fiber at $\oo \in \BC\BP^1$. 

Thus altogether, we have a canonical identification $T_\Ch \simeq T_\Cl$ compatible with the above splittings, but notably with the inverse map on the factor $S^1$.  
Now with the above identifications, one can trace through the definitions to conclude the following.

\begin{thm} For the perverse schober $\cP_A$, the Chekanov and Clifford functors factor  into the compositions
 \beq
\xymatrix{
F_\Ch:\cL \cD_\invar \ar@{^(->}[r] & \cL\Loc(T^\vanish_\Ch) \simeq
\Loc(T_\Ch) \ar@{^(->}[r] &  \mu\Sh_{L_\Ch}(M)\simeq \Gamma(\Cyl, \cP_A|_{\Cyl})
}
\eeq
 \beq
\xymatrix{
F_\Cl:\cL \cD_\invar \ar@{^(->}[r] & \cL\Loc(T^\vanish_\Cl) \simeq
\Loc(T_\Cl) \ar@{^(->}[r] &  \mu\Sh_{L_\Cl}(M)\simeq \Gamma(\Cyl, \cP_A|_{\Cyl})
}
\eeq
\end{thm}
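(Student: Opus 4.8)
The plan is to assemble the asserted factorizations from the identifications already constructed in this section, checking compatibility at each stage. First I would recall the two equivalences in play: the equivalence $\mu\Sh_{L_\Ch}(M)\simeq \Gamma(\Cyl,\cP_A)$ (and its Clifford counterpart $\mu\Sh_{L_\Cl}(M)\simeq \Gamma^\sh(\Cyl,\cP_A)$) from the Proposition, together with the mutation equivalence $\frF^\sh:\Gamma(\Cyl,\cP_{\Cyl,0})\risom\Gamma^\sh(\Cyl,\cP_{\Cyl,0})$ of Lemma~\ref{l:equiv ch} (predecessor). Composing the Clifford equivalence with $(\frF^\sh)^{-1}$ identifies $\mu\Sh_{L_\Cl}(M)$ with $\Gamma(\Cyl,\cP_A)$ as well, so both skeleta compute the same global sections; this is the content of the first displayed chain of equivalences at the end of Section~\ref{s:back to geom}. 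The abstract Chekanov and Clifford functors $F_\Ch,F_\Cl:\cL\cD_\invar\to\Gamma(\Cyl,\cP_A)$ of Theorem~\ref{thm:intro} were built in Section~\ref{s: awc} purely from the split triangle $\Delta_0$; I would now identify their targets' full subcategories $\Gamma(\Cyl,\cP_{A,\sm})$, $\Gamma^\sh(\Cyl,\cP_{A,\sm})$ with $\Loc(T_\Ch)$, $\Loc(T_\Cl)$ using the restricted equivalences of the Proposition.

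Second, I would match the source side. By Example~\ref{ex: intro} together with Theorem~\ref{thm:lgps}, the clean objects $\cD_\invar$ in the nearby category $\mu\Sh_{L^\circ}(\BC^n)\simeq\Perf_\proper(\BT^\vee_0)$ are exactly those supported on $\BT^\vee_0\setminus P_{n-2}$, which geometrically are the local systems on the vanishing torus $T^\vanish$ that extend across the thimble---equivalently, those clean in the sense of Definition~\ref{def:clean}. The inertia category $\cL\cD_\invar$ then sits inside $\cL\Loc(T^\vanish_\Ch)$ (resp. $\cL\Loc(T^\vanish_\Cl)$) as the full subcategory of pairs $(Y,m)$ with $Y$ clean, and the splitting-induced identifications $\cL\Loc(T^\vanish_\Ch)\simeq\Loc(T_\Ch)$, $\cL\Loc(T^\vanish_\Cl)\simeq\Loc(T_\Cl)$ recalled just above give the first arrows in each asserted composition. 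So the factorization reduces to the claim that, under these identifications, the abstract $F_\Ch$ (resp. $F_\Cl$) agrees with the inclusion $\Loc(T_\Ch)\hookrightarrow\mu\Sh_{L_\Ch}(M)$ (resp. $\Loc(T_\Cl)\hookrightarrow\mu\Sh_{L_\Cl}(M)$) coming from the closed embedding of tori.

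Third, I would verify this agreement by unwinding the local model $(B^*S^1,\Lambda\cap B^*S^1)\times(T^*T^\vanish_\Ch,T^\vanish_\Ch)$ used in the proof of the Proposition. There, an object of $\mu\Sh_{L_\Ch}(M)$ is a quintuple $(X,Y_1,Y_2,\Delta,m)$, and restriction to the zero-section $T_\Ch$ corresponds precisely to forcing the thimble datum $X$ to vanish and the connecting map $\partial$ to be an isomorphism---i.e. to the split triangle $\Delta_0$ that defines $F_\Ch(Y,m)=(0,Y,Y,\Delta_0,m)$. That is exactly the content of Lemma~\ref{l:equiv ch}, which says $F_\Ch$ is fully faithful with image the ``smooth'' subcategory $\Gamma(\Cyl,\cP_{A,\sm})$; combined with the restricted equivalence $\Gamma(\Cyl,\cP_{A,\sm})\simeq\Loc(T_\Ch)$ this gives the $F_\Ch$ factorization. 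For $F_\Cl$ one argues identically, replacing $\Gamma$ by $\Gamma^\sh$, $\Delta_0$ by $\Delta_0^\sh$, $\Loc(T_\Ch)$ by $\Loc(T_\Cl)$, and invoking the Clifford analogue of Lemma~\ref{l:equiv ch}; the only new input is that the winding of $\gamma_\Cl$ around $0$ is absorbed by the monodromy $T_{\Psi,r}$ appearing in $\Delta_0^\sh$, which is why the Clifford splitting requires the choice of the coordinate $z_n$.

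The main obstacle I expect is bookkeeping the normalization in the local equivalence \eqref{eq:a1}---making sure the chosen identification $\mu\Sh_{\Lambda\cap B^*S^1}(B^*S^1)\simeq\Perf_\proper(\BA^1)$ sends restriction to the conormal ray to $\Cone(\partial)[-1]$ \emph{and} that this choice is globally consistent as one transports around $\gamma_\Ch$ versus $\gamma_\Cl$. Since $\gamma_\Ch$ is contractible in $\BC\setminus\{0\}$ while $\gamma_\Cl$ generates $\pi_1$, the two normalizations differ by precisely the nearby monodromy, and one must check this matches the $T_{\Psi,r}$ built into $\Gamma^\sh$ rather than its inverse; getting this orientation right (equivalently, that the canonical identification $T_\Ch\simeq T_\Cl$ carries the inverse map on the $S^1$ factor, as stated) is the delicate point. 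Everything else is a direct translation through definitions already in place.
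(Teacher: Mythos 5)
Your proposal is correct and follows essentially the same route the paper intends: the paper itself offers no written proof beyond ``one can trace through the definitions,'' and your assembly of the Proposition's equivalences $\mu\Sh_{L_\Ch}(M)\simeq\Gamma(\Cyl,\cP_A)$, $\mu\Sh_{L_\Cl}(M)\simeq\Gamma^\sh(\Cyl,\cP_A)$ (transported back via $(\frF^\sh)^{-1}$), the splitting identifications $\cL\Loc(T^\vanish)\simeq\Loc(T)$, and Lemma~\ref{l:equiv ch} identifying the image of $F_\Ch$ (resp.\ $F_\Cl$) with the smooth-sections subcategory is exactly that trace. Your flag of the orientation/normalization of the local equivalence \eqref{eq:a1} as the one delicate point is also where the paper's ``inverse map on the $S^1$ factor'' remark lives, so nothing is missing.
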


\begin{corollary}
The wall-crossing formula of Corollary~\ref{c:trans} is the birational map on moduli of objects for the partially defined functor
$
 \Loc(T_\Ch)\to  \Loc(T_\Cl)
$
 given by 
comparing clean local systems on the Chekanov and Clifford tori as objects in  $\Gamma(\Cyl, \cP_A)$
with coordinates related by the framing $\tau$.

\end{corollary}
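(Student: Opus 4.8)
The plan is to combine the abstract wall-crossing identity of Corollary~\ref{c:trans}, namely $K_\frakm \simeq F_\Cl^{-1}\circ \frF^\sh\circ F_\Ch$, with the geometric factorizations of $F_\Ch$ and $F_\Cl$ recorded in the preceding theorem, and then pass to moduli of objects. By that theorem, $F_\Ch$ is the composite of $\cL\cD_\invar\hra \cL\Loc(T^\vanish_\Ch)\simeq \Loc(T_\Ch)$ with the fully faithful embedding $\Loc(T_\Ch)\simeq \mu\Sh_{T_\Ch}(M)\hra \mu\Sh_{L_\Ch}(M)\simeq \Gamma(\Cyl,\cP_A|_\Cyl)$, and likewise $F_\Cl$ factors through $\cL\cD_\invar\hra \cL\Loc(T^\vanish_\Cl)\simeq \Loc(T_\Cl)\hra \mu\Sh_{L_\Cl}(M)\simeq \Gamma^\sh(\Cyl,\cP_A|_\Cyl)$. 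Thus the partially defined functor $\Loc(T_\Ch)\to \Loc(T_\Cl)$ of the statement sends $\cF\in\Loc(T_\Ch)$ to $\cF'\in\Loc(T_\Cl)$ whenever they become isomorphic in $\Gamma(\Cyl,\cP_A|_\Cyl)$ under $\mu\Sh_{L_\Ch}(M)\simeq \Gamma(\Cyl,\cP_A)\xrightarrow{\frF^\sh}\Gamma^\sh(\Cyl,\cP_A)\simeq\mu\Sh_{L_\Cl}(M)$. A short inspection of $\frF^\sh$ shows that $\frF^\sh(F_\Ch(Y,m))$ has vanishing $\cD_\Phi$-component precisely when $S^r(Y)\simeq 0$, i.e.\ precisely on the clean locus; hence this comparison functor is defined exactly on clean local systems, and there it coincides with $F_\Cl^{-1}\circ\frF^\sh\circ F_\Ch = K_\frakm$ by Corollary~\ref{c:trans}.

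Next I would make the moduli of objects of $\cL\cD_\invar$ explicit. Transporting Example~\ref{ex: intro} through Theorem~\ref{thm:lgps}, the clean objects are the rank one local systems on $T^\vanish$ supported away from $P_{n-2}=\{1+y_1+\cdots+y_{n-1}=0\}\subset\BT^\vee_0$, so the moduli of objects of $\cL\cD_\invar$ is the open subset of $\BT^\vee\simeq(\Gm)^n$ on which $1+y_1+\cdots+y_{n-1}$ is invertible, with $y_1,\dots,y_{n-1}$ the coordinates on $\BT^\vee_0$ recording the underlying local system on the vanishing torus and $y_n$ the $\Gm$-coordinate recording the automorphism $m$. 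On the Chekanov side this is literally the moduli of clean local systems on $T_\Ch$ via the splitting $T^\vanish_\Ch\times\gamma_\Ch\risom T_\Ch$ (so $y_n$ is the holonomy around $\gamma_\Ch$); on the Clifford side one uses the splitting $T^\vanish_\Cl\times\gamma_\Cl\risom T_\Cl$ determined by the coordinate $z_n$, and the orientation-reversing identification $T_\Ch\simeq T_\Cl$ produced by the extension $\ol W=[W,z]$ — which inverts the $\gamma$-holonomy — is exactly what turns the geometric comparison over the wall into the abstract recipe $(Y,m)\mapsto(Y,m^{-1}\circ\frakm)$.

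Finally I would read off $K_\frakm$ in these coordinates. The assignment $(Y,m)\mapsto(Y,m^{-1}\circ\frakm)$ fixes the underlying vanishing-torus local system and multiplies the automorphism scalar by the central function $\frakm$, so $y_i\mapsto y_i$ for $i=1,\dots,n-1$ and $y_n\mapsto y_n^{-1}\,\frakm$. By Example~\ref{ex hyper wc} applied to $\cP_B$ with framing $\tau = x_n$ and $\sigma$ the defining section of $P_{n-2}$, we have $\frakm=\sigma/\tau=1+y_1+\cdots+y_{n-1}$, non-vanishing exactly on the clean locus; hence $y_n\mapsto y_n^{-1}(1+y_1+\cdots+y_{n-1})$, and by Theorem~\ref{thm:lgps} the same formula governs the comparison for $\cP_A$ with the framing $\tau$ given by $z_n$. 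This is the birational map~\eqref{eq:mut}, and that it is only birational — rather than everywhere defined — on $\BT^\vee$ reflects that $K_\frakm$, though an equivalence of $\cL\cD_\invar$, only implements the comparison over the open dense clean locus where $\frakm$ is invertible.

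The main obstacle is the bookkeeping in the second paragraph: one must match the abstract monodromy datum $m$ with the actual holonomy of a local system on $T_\Ch$ around the $\gamma$-factor, and verify that the orientation-reversing identification $T_\Ch\simeq T_\Cl$ produced by $\ol W$ implements precisely the inversion $m\mapsto m^{-1}$, with no spurious additional twist by a character of the vanishing torus. Granting this normalization, the corollary is a formal consequence of Corollary~\ref{c:trans}, the factorization theorem, and Examples~\ref{ex: intro} and~\ref{ex hyper wc}.
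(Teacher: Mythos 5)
Your proposal is correct and follows exactly the route the paper intends: the corollary is presented there as an immediate consequence of the preceding factorization theorem together with Corollary~\ref{c:trans}, with the explicit coordinate computation of $\frakm=\sigma/\tau=1+y_1+\cdots+y_{n-1}$ carried out in Examples~\ref{ex: intro} and~\ref{ex hyper wc} and the inversion $m\mapsto m^{-1}$ traced to the orientation-reversing identification $T_\Ch\simeq T_\Cl$, just as you do. Your observation that $\frF^\sh(F_\Ch(Y,m))$ has vanishing $\cD_\Phi$-component exactly when $S^r(Y)\simeq 0$ is a nice explicit justification of why the comparison functor is defined precisely on the clean locus, which the paper leaves implicit.
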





\end{document}